\theoremstyle{plain}
\newaliascnt{lemma}{theorem}
\newaliascnt{proposition}{theorem}
\newtheorem{proposition}[proposition]{Proposition}
\newaliascnt{corollary}{theorem}
\theoremstyle{definition}
\newaliascnt{definition}{theorem}
\newtheorem{definition}[definition]{Definition}
\theoremstyle{remark}
\newaliascnt{remark}{theorem}
\newtheorem{remark}[remark]{Remark}
\newaliascnt{example}{theorem}
\newtheorem{example}[example]{Example}
\newcommand\g{\mathfrak g}
\newcommand\mr{\mathring}
\newcommand\mc{\mathcal}
\newcommand\T{\mc T}
\newcommand\abs[1]{\left\lvert{#1}\right\rvert}
\newcommand\norm[1]{\left\lVert{#1}\right\rVert}
\newcommand\wh{\widehat}
\newcommand\wt{\widetilde}
\newcommand\V{\widehat V}
\newcommand\dd[2][]{\frac{d{#1}}{d{#2}}}
\DeclareMathOperator{\grad}{grad}
\DeclareMathOperator{\curl}{curl}
\DeclareMathOperator{\Div}{div}
\DeclareMathOperator{\tr}{tr}
\DeclareMathSymbol{*}{\mathord}{symbols}{"03}
\begin{document}
\title[Charge-conserving hybrid methods for Yang--Mills]{Charge-conserving hybrid
  methods for the Yang--Mills equations}

\author{Yakov Berchenko-Kogan}
\author{Ari Stern}

\begin{abstract}
  The Yang--Mills equations generalize Maxwell's equations to
  nonabelian gauge groups, and a quantity analogous to charge is
  locally conserved by the nonlinear time evolution. \citet{ChWi2006}
  observed that, in the nonabelian case, the Galerkin method with Lie
  algebra-valued finite element differential forms appears to conserve
  charge globally but not locally, not even in a weak sense. We introduce
  a new hybridization of this method, give an alternative expression
  for the numerical charge in terms of the hybrid variables, and show
  that a local, per-element charge conservation law automatically
  holds.
\end{abstract}

\maketitle

\section{Introduction}

In 1954, \citet{YaMi1954} introduced a nonabelian gauge theory, generalizing and extending the abelian gauge theory of quantum electrodynamics. As a quantum field theory, Yang--Mills theory came to form the foundation of the Standard Model of particle physics. One may also consider classical (as opposed to quantum) solutions to the Yang--Mills equations, which can be seen as a nonlinear, nonabelian generalization of Maxwell's equations. Beyond physics, the study of classical Yang--Mills solutions has played an important role in geometry and topology \citep{DoKr1990}.

A seminal 1974 paper of \citet{Wilson1974} introduced lattice gauge theory, in which quantum Yang--Mills theory is discretized using a finite-difference-like approach. However, interest in discretization and numerical simulation of the \emph{classical} Yang--Mills equations seems to be more recent, motivated by a desire to extend insights from computational electromagnetics to develop structure-preserving methods for a more general class of nonlinear field theories. In a 2006 paper, \citet{ChWi2006} write, ``The Yang--Mills equations appear relatively ripe for numerical analysis and could therefore serve as a stepping stone toward the successful simulation of more complicated equations,'' such as Einstein's equations of general relativity.

Solutions to the Yang--Mills equations must satisfy a charge conservation law. In the special case of Maxwell's equations, this conservation law says that, in the absence of current, the charge density $ \rho = \operatorname{div} {D} $ is constant in time. The equation $ \rho = \operatorname{div} {D} $ is often viewed as a constraint, but since it is automatically preserved by the evolution of $D$, the constraint need not be ``enforced'' in any way. (A similar issue arises in Einstein's equations, whose nonlinear evolution also preserves physically important constraints.) One would also like the charge conservation law to continue to hold in numerical simulations of the Yang--Mills equations, but this is not necessarily the case, even for Maxwell's equations.

\citet{ChWi2006} observe that a standard Galerkin semidiscretization of the Yang--Mills equations only yields conservation of the total charge on the whole domain. Locally, charge is not conserved, as they illustrate in Figure 3 of their paper. Christiansen and Winther solve this problem with a constrained scheme that artificially imposes the charge conservation constraint. A different low-order charge-conserving method, based on lattice gauge theory, was given by \citet{ChHa2012}; this method preserves the constraint automatically but requires commiting a ``variational crime'' by modifying the Yang--Mills variational principle.

In contrast, we present an alternate approach, which automatically preserves a local charge conservation law without modifying the Yang--Mills variational principle. As in our work on Maxwell's equations in \citep{BeSt2020}, we consider the domain-decomposed problem, where we use discontinuous finite element spaces for our vector and scalar potentials, and then impose inter-element continuity and boundary conditions with Lagrange multipliers $\wh H$ and $\wh D$. Using the hybrid variable $\wh D$, we obtain an expression for the charge. While we are not able to get strong charge conservation when we semidiscretize, as we did for Maxwell's equations, we are able to get a local conservation law: the total charge \emph{on each element} is conserved.

The reader may naturally ask why we would be motivated to take this approach. Why not simply project the solution onto the constraint manifold, as \citet{ChWi2006} did, so that $D$ itself satisfies the constraint rather than $ \widehat{ {D} } $? The reason is that Lagrangian and Hamiltonian dynamical systems often have several conservation laws, and enforcing a single one via projection can result in \emph{worse} numerical solutions. A vivid illustration is given in \citet*[Section IV.4]{HaLuWa2006}, who present numerical simulations of the Kepler problem by the symplectic Euler method, with or without enforcing conservation of energy via projection. Perhaps surprisingly, projection makes the numerical solution much worse, destroying conservation of other quantities such as angular momentum. In fact, the symplectic Euler method automatically conserves a \emph{modified} energy \citep[Chapter IX]{HaLuWa2006}. This illustrates that automatic preservation of a modified conservation law (in the case of this paper, conservation of charge using $ \widehat{ {D} } $ rather than $D$) may be preferable to enforcing the original conservation law by projection, which risks destroying other structures that one might also wish to preserve.

The paper is structured as follows. In \autoref{sec:preliminaries}, we introduce our notation and discuss the Yang--Mills equations, leading up to the conservation of total charge in the Galerkin semidiscretization observed by Christiansen and Winther. In \autoref{sec:domaindecomposition}, we describe our domain-decomposed numerical scheme for the Yang--Mills equations and prove that it satisfies a local charge conservation property. In \autoref{sec:numerical}, we discuss our numerical implementation and illustrate with examples. Finally, in \autoref{sec:current}, we remark on how these results generalize to the Yang--Mills equations with nonzero current.

\section{Preliminaries}\label{sec:preliminaries}

\subsection{Lie algebra-valued differential forms}
In this section, we introduce Lie algebra-valued differential forms, largely following \citep{DoKr1990}.

Let $G$ be a compact Lie group with Lie algebra $\mathfrak{g}$. Let $[\cdot,\cdot]\colon\g\times\g\to\g$ denote the Lie bracket on $\g$. Such a Lie algebra always has an $\mathrm{Ad}$-invariant inner product $\langle\cdot,\cdot\rangle\colon\g\times\g\to\mathbb R$ with the property that $\langle[\xi,\eta],\omega\rangle+\langle \eta,[\xi,\omega]\rangle=0$ for all $\xi,\eta,\omega\in\g$.

Any compact Lie group can be represented as a group of unitary matrices, whose algebra consists of skew-Hermitian matrices with the commutator bracket $ [ \xi, \eta ] = \xi \eta - \eta \xi $. For simplicity of notation, we will thus view both $G$ and $\g$ as sets of matrices, in which case we can choose the inner product to simply be $\langle\xi,\eta\rangle=\tr(\xi^*\eta)$. where $\xi^*$ denotes the conjugate transpose of $\xi$.

\begin{definition}
  Let $\Omega\subset\mathbb R^n$ be a bounded Lipschitz domain. A \emph{$\g$-valued $k$-form} on $\overline\Omega$ is a section of the bundle $\left(\bigwedge^kT^*\overline\Omega\right)\otimes\g$. We will denote the space of $\g$-valued $k$-forms by $\Lambda^k(\overline\Omega,\g)$. We will denote the $L^p$ Lebesgue spaces of sections of $\left(\bigwedge^kT^*\overline\Omega\right)\otimes\g$ by $L^p\Lambda^k(\overline\Omega,\g)$.
\end{definition}

\begin{example}
  In the setting of electromagnetism, $G=U(1)$, the unit complex numbers. Then $\g=i\mathbb R$, the purely imaginary numbers. Thus, in this setting, a $\g$-valued $k$-form is simply an ordinary $k$-form times the imaginary unit $i$. The Lie bracket $[\cdot,\cdot]$ is identically zero, and the inner product is simply $\langle ia,ib\rangle=ab$.
\end{example}

The space $\Lambda^k(\overline\Omega,\g)$ is spanned by forms $\alpha\otimes\xi$, where $\alpha$ is a real-valued $k$-form and $\xi$ is an element of $\g$. With this decomposition, we can define several operations on $\g$-valued $k$-forms.
\begin{definition}
  Given $u=\alpha\otimes\xi\in\Lambda^k(\overline\Omega,\g)$ and $v=\beta\otimes\eta\in\Lambda^l(\overline\Omega,\g)$, define
  \begin{align*}
    du&=d\alpha\otimes\xi\in\Lambda^{k+1}(\overline\Omega,\g),\\
    *u&=*\alpha\otimes\xi\in\Lambda^{n-k}(\overline\Omega,\g),\\
    [u\wedge v]&=(\alpha\wedge\beta)\otimes[\xi,\eta]\in\Lambda^{k+l}(\overline\Omega,\g),\\
    \langle u\wedge v\rangle&=(\alpha\wedge\beta)\,\langle \xi,\eta\rangle\in\Lambda^{k+l}(\overline\Omega,\mathbb R),
  \end{align*}
  and extend these operations to arbitrary $\g$-valued forms by linearity.

  In the case where either $u$ or $v$ is a $0$-form, i.e., just a Lie algebra-valued function, we will often write $ [ \cdot, \cdot ] $ and $ \langle \cdot , \cdot \rangle $ instead of $ [ \cdot \wedge \cdot ] $ and $ \langle \cdot \wedge \cdot \rangle $.
\end{definition}
We have the following identities for $\mathfrak{g}$-valued forms.

\begin{proposition}\label{prop:identities}
  For $ u \in \Lambda ^k ( \overline{ \Omega } , \mathfrak{g} ) $,
  $ v \in \Lambda ^l ( \overline{ \Omega }, \mathfrak{g} )$, we have
  the Leibniz rules
  \begin{align}
    {d} [ u \wedge v ] &= [ {d} u \wedge v ] + ( -1 ) ^k [ u \wedge dv ] , \label{eqn:squareLeibniz} \\
    {d} \langle  u \wedge v \rangle  &= \langle  {d} u \wedge v \rangle  + ( -1 ) ^k \langle  u \wedge dv \rangle \label{eqn:angleLeibniz},
  \end{align}
  and the commutativity relations
  \begin{align}
    [ u \wedge v ] + ( - 1 ) ^{k l} [ v \wedge u ] &= 0, \label{eqn:squareCommute} \\
    \langle u \wedge v \rangle - ( -1 ) ^{ k l } \langle v \wedge u \rangle &= 0 \label{eqn:angleCommute}.
  \end{align}
  Additionally, given
  $ w \in \Lambda ^p ( \overline{ \Omega } , \mathfrak{g} ) $,
  \begin{align}
    \bigl[ [ u \wedge v ] \wedge w \bigr] + ( -1 ) ^{ k l } \bigl[ v \wedge [u \wedge w ] \bigr] &= \bigl[ u \wedge [ v \wedge w ] \bigr] , \label{eqn:squareAssoc} \\
    \bigl\langle [ u \wedge v ] \wedge w \bigr\rangle + ( - 1 ) ^{ k l } \bigl\langle v \wedge [ u \wedge w ] \bigr\rangle &= 0 \label{eqn:angleAssoc}.
  \end{align}
\end{proposition}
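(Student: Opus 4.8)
The plan is to reduce every identity to the defining formulas for $d$, $[\cdot\wedge\cdot]$, and $\langle\cdot\wedge\cdot\rangle$ on decomposable forms. Since $\Lambda^k(\overline\Omega,\g)$ is spanned by forms $\alpha\otimes\xi$ with $\alpha$ a real-valued $k$-form and $\xi\in\g$ constant, and since all six expressions are multilinear in $u$, $v$, $w$, it suffices to verify each identity when $u=\alpha\otimes\xi$, $v=\beta\otimes\eta$, $w=\gamma\otimes\zeta$ are decomposable. On such forms the $\g$-part and the real-form part separate, so each identity factors into a known statement about real differential forms together with a known algebraic statement about $\g$.

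For the Leibniz rules, I would apply the definitions to write $[u\wedge v]=(\alpha\wedge\beta)\otimes[\xi,\eta]$ and differentiate; since $\xi,\eta$ are constant, $d$ acts only on $\alpha\wedge\beta$, and the ordinary Leibniz rule $d(\alpha\wedge\beta)=d\alpha\wedge\beta+(-1)^k\alpha\wedge d\beta$ for real forms immediately yields \eqref{eqn:squareLeibniz}. Repeating the computation with $[\xi,\eta]$ replaced by the scalar $\langle\xi,\eta\rangle$ gives \eqref{eqn:angleLeibniz}.

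For the commutativity relations, I would use the graded-commutativity of the real wedge product, $\beta\wedge\alpha=(-1)^{kl}\alpha\wedge\beta$, together with the antisymmetry $[\eta,\xi]=-[\xi,\eta]$ of the Lie bracket for \eqref{eqn:squareCommute}, and with the symmetry $\langle\eta,\xi\rangle=\langle\xi,\eta\rangle$ of the inner product for \eqref{eqn:angleCommute}. In each case the two sign factors $(-1)^{kl}$ combine so that the stated expression reduces to $[u\wedge v]-[u\wedge v]=0$, respectively its angle-bracket analogue.

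The Jacobi-type identities \eqref{eqn:squareAssoc} and \eqref{eqn:angleAssoc} are where the sign bookkeeping is most delicate, and I expect this to be the main obstacle. Here I would first use associativity of the real wedge product to express every term over the common factor $\alpha\wedge\beta\wedge\gamma$; the only nontrivial rearrangement is moving $\beta$ past $\alpha$ in the middle term, which contributes a factor $(-1)^{kl}$ that precisely cancels the explicit $(-1)^{kl}$ in the statement. What remains is a purely algebraic identity in $\g$: for \eqref{eqn:squareAssoc} it is the derivation form of the Jacobi identity, $[\xi,[\eta,\zeta]]=[[\xi,\eta],\zeta]+[\eta,[\xi,\zeta]]$, and for \eqref{eqn:angleAssoc} it is exactly the invariance property $\langle[\xi,\eta],\zeta\rangle+\langle\eta,[\xi,\zeta]\rangle=0$ of the inner product assumed at the outset. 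Matching the real-form signs against these two algebraic facts completes the proof.
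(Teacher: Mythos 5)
Your proposal is correct and follows essentially the same route as the paper's proof: reduce to decomposable forms $\alpha\otimes\xi$ by multilinearity, then invoke the real Leibniz rule, graded commutativity of the wedge product, and the antisymmetry, Jacobi identity, and invariance property of the Lie algebra structure. Your sign bookkeeping for \eqref{eqn:squareAssoc} and \eqref{eqn:angleAssoc} (the $(-1)^{kl}$ from transposing $\beta$ past $\alpha$ cancelling the explicit $(-1)^{kl}$ in the statement) is exactly right.
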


\begin{proof}
  It suffices to prove these identities for forms of the type
  $ u = \alpha \otimes \xi $, $ v = \beta \otimes \eta $,
  $ w = \gamma \otimes \omega $, since they extend to arbitrary forms
  by linearity.

  The Leibniz rules \eqref{eqn:squareLeibniz} and
  \eqref{eqn:angleLeibniz} follow immediately from the Leibniz rule
  $ {d} ( \alpha \wedge \beta ) = {d} \alpha \wedge \beta + ( - 1 ) ^k
  \alpha \wedge {d} \beta $ for ordinary real-valued forms.

  The commutativity relations \eqref{eqn:squareCommute} and
  \eqref{eqn:angleCommute} follow from
  $ \alpha \wedge \beta = ( - 1 ) ^{ k l } \beta \wedge \alpha $,
  together with the antisymmetry of $ [ \cdot , \cdot ] $ and symmetry
  of $ \langle \cdot , \cdot \rangle $, respectively.

  Finally, \eqref{eqn:squareAssoc} and \eqref{eqn:angleAssoc} follow
  from
  $ \alpha \wedge \beta \wedge \gamma = ( - 1 ) ^{ k l } \beta \wedge
  \alpha \wedge \gamma $, together with the Jacobi identity for
  $ [ \cdot , \cdot ] $ and the invariance property
  $\langle[\xi,\eta],\omega\rangle+\langle \eta,[\xi,\omega]\rangle=0$
  of $ \langle \cdot , \cdot \rangle $, respectively.
\end{proof}

In the classical formulation of electromagnetics, the electric field $E$ and electric flux density $ {D} = \epsilon E $ are vector fields, where $\epsilon$ is the \emph{electric permittivity tensor}. Likewise, the magnetic flux density $B$ and magnetic field $ H = \mu ^{-1} B $ are vector fields, where $\mu$ is the \emph{magnetic permeability tensor}. When expressed in terms of differential forms, $E$ and $H$ are $1$-forms, $D$ and $B$ are $2$-forms, and $\epsilon$ and $ \mu ^{-1} $ correspond to the Hodge star operator mapping $1$-forms and $2$-forms to $ (3-1) $-forms and $(3-2)$-forms, respectively. In vacuum, with appropriately chosen units, each of these is simply the ordinary Hodge star operator $*$. For more on the differential forms point of view for finite element methods in computational electromagnetics, see \citet{Hiptmair2002} and references therein.

This motivates the following generalized notion of electric permittivity and magnetic permeability, in arbitrary dimension $n$, for both ordinary and $\mathfrak{g}$-valued differential forms.

\begin{definition}\label{def:epsilon_mu}
The electric permittivity tensor $\epsilon$ and magnetic permeability tensor $\mu$ are pointwise symmetric isomorphisms
\begin{align*}
\epsilon_x&\colon\textstyle\bigwedge^1T^*_x\overline\Omega\to\bigwedge^{n-1}T^*_x\overline\Omega,& \mu_x^{-1}&\colon\textstyle\bigwedge^2T^*_x\overline\Omega\to\bigwedge^{n-2}T^*_x\overline\Omega.
\end{align*}
for each $x\in\overline\Omega$. The symmetry of $\epsilon$ and $\mu^{-1}$ is in the sense that
\begin{align*}
  \alpha\wedge\epsilon\beta&=\beta\wedge\epsilon\alpha&\text{ for any }\alpha,\beta\in\Lambda^1(\overline\Omega,\mathbb R),\\
  \alpha\wedge\mu^{-1}\beta&=\beta\wedge\mu^{-1}\alpha&\text{ for any }\alpha,\beta\in\Lambda^2(\overline\Omega,\mathbb R).
\end{align*}
We can extend these isomorphisms to maps
\begin{align*}
  \epsilon_x&\colon\textstyle\bigwedge^1T^*_x\overline\Omega\otimes\g\to\bigwedge^{n-1}T^*_x\overline\Omega\otimes\g,&\mu_x^{-1}&\colon\textstyle\bigwedge^2T^*_x\overline\Omega\otimes\g\to\bigwedge^{n-2}T^*_x\overline\Omega\otimes\g
\end{align*}
by ignoring the Lie algebra coefficient; that is $\epsilon_x(\alpha_x\otimes\xi_x):=\epsilon_x\alpha_x\otimes\xi_x$.
\end{definition}

As before, these operators have (anti)symmetry properties.
\begin{proposition}
  \begin{align*}
    [u\wedge\epsilon v]&=-[v\wedge\epsilon u],&\langle u\wedge\epsilon v\rangle&=\langle v\wedge\epsilon u\rangle,&u,v\in\Lambda^1(\overline\Omega,\g)\\
    [u\wedge\mu^{-1}v]&=-[v\wedge\mu^{-1}u],&\langle u\wedge\mu^{-1}v\rangle&=\langle v\wedge\mu^{-1}u\rangle,&u,v\in\Lambda^2(\overline\Omega,\g).
  \end{align*}
  In particular, $[u\wedge\epsilon u]=0$ for $u\in\Lambda^1(\overline\Omega,\g)$ and $[u\wedge\mu^{-1}u]=0$ for $u\in\Lambda^2(\overline\Omega,\g)$.
  \begin{proof}
    As before, we can prove these claims for basic tensors $u=\alpha\otimes\xi$ and $v=\beta\otimes\eta$ using the symmetry of $\epsilon$, $\mu^{-1}$, and $\langle \cdot , \cdot \rangle$ and the antisymmetry of $[\cdot , \cdot ]$. We then extend to general $u$ and $v$ by linearity.
  \end{proof}
\end{proposition}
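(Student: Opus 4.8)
The plan is to verify each identity first on basic tensors $u=\alpha\otimes\xi$ and $v=\beta\otimes\eta$ --- where $\alpha,\beta$ are real-valued forms of the relevant degree ($1$ for the $\epsilon$ identities, $2$ for the $\mu^{-1}$ identities) and $\xi,\eta\in\g$ are constant --- and then to extend to arbitrary $u,v$ by bilinearity of all four expressions.

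First I would unfold the definitions. Since $\epsilon$ acts by ignoring the Lie-algebra factor, $\epsilon v=(\epsilon\beta)\otimes\eta$, so the definitions of $[\cdot\wedge\cdot]$ and $\langle\cdot\wedge\cdot\rangle$ give $[u\wedge\epsilon v]=(\alpha\wedge\epsilon\beta)\otimes[\xi,\eta]$ and $\langle u\wedge\epsilon v\rangle=(\alpha\wedge\epsilon\beta)\langle\xi,\eta\rangle$, with the analogous formulas after swapping $u$ and $v$. The heart of the argument is then to play the symmetry of $\epsilon$ as a bilinear form, $\alpha\wedge\epsilon\beta=\beta\wedge\epsilon\alpha$ from \autoref{def:epsilon_mu}, off against the (anti)symmetry of the coefficient pairing. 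For the bracket identity I would combine this with the antisymmetry $[\xi,\eta]=-[\eta,\xi]$ to obtain $[u\wedge\epsilon v]=(\beta\wedge\epsilon\alpha)\otimes(-[\eta,\xi])=-[v\wedge\epsilon u]$; for the inner-product identity I would instead use the symmetry $\langle\xi,\eta\rangle=\langle\eta,\xi\rangle$ to obtain $\langle u\wedge\epsilon v\rangle=(\beta\wedge\epsilon\alpha)\langle\eta,\xi\rangle=\langle v\wedge\epsilon u\rangle$. The $\mu^{-1}$ statements run verbatim, now with $\alpha,\beta$ of degree $2$ and the symmetry $\alpha\wedge\mu^{-1}\beta=\beta\wedge\mu^{-1}\alpha$. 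The two ``in particular'' claims then follow by setting $v=u$ in the antisymmetry relations: $[u\wedge\epsilon u]=-[u\wedge\epsilon u]$ forces $[u\wedge\epsilon u]=0$ since we work over $\mathbb R$, and likewise for $\mu^{-1}$.

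I do not expect a genuine obstacle. The one point that needs care is the bookkeeping of signs: the factor that would arise from graded-commuting the wedge product is never invoked and must not be, since the entire content is the built-in symmetry of $\epsilon$ and $\mu^{-1}$ interacting with the sign (or absence of sign) contributed by the $\g$-factor. A secondary check is that extending by bilinearity is legitimate, which holds because $[\cdot\wedge\cdot]$, $\langle\cdot\wedge\cdot\rangle$, $\epsilon$, and $\mu^{-1}$ are all defined to be (bi)linear.
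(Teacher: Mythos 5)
Your proposal is correct and follows the same route as the paper's proof: reduce to basic tensors $u=\alpha\otimes\xi$, $v=\beta\otimes\eta$, invoke the symmetry of $\epsilon$ and $\mu^{-1}$ together with the antisymmetry of $[\cdot,\cdot]$ (resp.\ symmetry of $\langle\cdot,\cdot\rangle$), and extend by bilinearity. The paper's proof is just a one-sentence version of exactly this argument, so there is nothing to add.
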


\subsection{Connections, curvature, and the exterior covariant derivative}
We now discuss connections, again following \citep{DoKr1990}. As in \citep{ChWi2006}, we restrict our attention to the trivial bundle case. In this setting, a \emph{connection} $A$ is just a $\g$-valued one-form.
\begin{definition}
  Let $A\in\Lambda^1(\overline\Omega,\g)$. The \emph{curvature} of $A$, denoted $F_A\in\Lambda^2(\overline\Omega,\g)$, is defined by
  \begin{equation*}
    F_A=dA+\tfrac12[A\wedge A].
  \end{equation*}
  The \emph{exterior covariant derivative with respect to $A$}, denoted $d_A\colon\Lambda^k(\overline\Omega,\g)\to\Lambda^{k+1}(\overline\Omega,\g)$, is defined by
  \begin{equation*}
    d_Au=du+[A\wedge u].
  \end{equation*}
\end{definition}

\begin{example}
  In the setting of electromagnetism with $G=U(1)$, the Lie algebra has trivial commutator $[\xi,\eta]=0$. Thus, $F_A=dA$ and $d_A=d$.
\end{example}

Unlike in electromagnetism, $d_A^2\neq0$. Instead, $d_A^2=F_A$, in the following sense:
\begin{proposition}
  Let $u\in\Lambda^k(\overline\Omega,\g)$. Then
  \begin{equation*}
    d_A(d_Au)=[F_A\wedge u]\in\Lambda^{k+2}(\overline\Omega,\g).
  \end{equation*}
\end{proposition}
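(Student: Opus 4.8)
The plan is to expand $d_A(d_A u)$ directly from the definitions $d_A u = du + [A\wedge u]$ and $F_A = dA + \tfrac12[A\wedge A]$, then collect terms using the Leibniz rule \eqref{eqn:squareLeibniz} and the graded associativity relation \eqref{eqn:squareAssoc}. First I would write
\[
  d_A(d_A u) = d(d_A u) + [A\wedge d_A u].
\]
For the first term, since $d(du)=0$, the only surviving contribution is $d[A\wedge u]$, which by \eqref{eqn:squareLeibniz} (with $A$ a one-form, so the sign is $(-1)^1=-1$) equals $[dA\wedge u]-[A\wedge du]$. For the second term, bilinearity of the bracket gives $[A\wedge d_Au] = [A\wedge du] + [A\wedge[A\wedge u]]$. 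Adding the two expressions, the mixed terms $\mp[A\wedge du]$ cancel, leaving
\[
  d_A(d_A u) = [dA\wedge u] + [A\wedge[A\wedge u]].
\]

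The remaining task is to identify the nested-bracket term as the quadratic part of the curvature. Here I would invoke the associativity relation \eqref{eqn:squareAssoc} with all three arguments being one-forms: taking the first two to be $A$ and the third to be $u$, so that $(-1)^{kl}=(-1)^{1\cdot 1}=-1$, the relation reads $\bigl[[A\wedge A]\wedge u\bigr]-\bigl[A\wedge[A\wedge u]\bigr]=\bigl[A\wedge[A\wedge u]\bigr]$, which rearranges to $\bigl[A\wedge[A\wedge u]\bigr]=\tfrac12\bigl[[A\wedge A]\wedge u\bigr]$. Substituting this in and then factoring $u$ out of both terms on the right yields
\[
  d_A(d_A u) = \bigl[dA\wedge u\bigr] + \tfrac12\bigl[[A\wedge A]\wedge u\bigr] = \bigl[(dA+\tfrac12[A\wedge A])\wedge u\bigr] = [F_A\wedge u],
\]
as claimed.

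The computation is essentially mechanical, so I expect the only delicate point to be the bookkeeping of signs in the application of \eqref{eqn:squareAssoc}: the factor $(-1)^{kl}$ must be evaluated correctly (with both copies of $A$ in degree one, giving $kl=1$) so that the relation produces exactly the factor of $2$ needed to match the $\tfrac12$ in the definition of $F_A$. As a sanity check I would confirm the abelian special case $G=U(1)$, where $[\cdot,\cdot]\equiv 0$ forces both sides to vanish, consistent with $d_A=d$ and $d^2=0$.
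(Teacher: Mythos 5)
Your proof is correct. The paper states this proposition without proof, so there is nothing to compare against; your direct expansion of $d_A(d_Au)$, cancelling the mixed $[A\wedge du]$ terms via the Leibniz rule \eqref{eqn:squareLeibniz} and then using \eqref{eqn:squareAssoc} with $k=l=1$ to obtain $\bigl[A\wedge[A\wedge u]\bigr]=\tfrac12\bigl[[A\wedge A]\wedge u\bigr]$, is exactly the standard argument the authors evidently had in mind, and your sign bookkeeping is right.
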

Additionally, we will make use of the Bianchi identity
\begin{proposition}[Bianchi identity]
  \begin{equation*}
    d_AF_A=0.
  \end{equation*}
\end{proposition}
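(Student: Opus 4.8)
The plan is to expand everything using the definitions $d_A u = du + [A \wedge u]$ and $F_A = dA + \tfrac12[A \wedge A]$, and then show that the resulting terms cancel by means of $d^2 = 0$, the Leibniz rule \eqref{eqn:squareLeibniz}, the commutativity relation \eqref{eqn:squareCommute}, and the Jacobi-type identity \eqref{eqn:squareAssoc}.

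First I would write
\[
  d_A F_A = d F_A + [A \wedge F_A] = d\bigl(dA + \tfrac12[A \wedge A]\bigr) + \bigl[A \wedge \bigl(dA + \tfrac12[A \wedge A]\bigr)\bigr].
\]
The term $d(dA)$ vanishes because $d^2 = 0$ for ordinary forms and hence for $\g$-valued forms. For $d[A \wedge A]$, applying \eqref{eqn:squareLeibniz} with $k = 1$ gives $[dA \wedge A] - [A \wedge dA]$; since $dA$ has degree $2$ and $A$ has degree $1$, the commutativity relation \eqref{eqn:squareCommute} yields $[dA \wedge A] = -[A \wedge dA]$, so $d\bigl(\tfrac12[A \wedge A]\bigr) = -[A \wedge dA]$ and thus $dF_A = -[A \wedge dA]$.

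Next I would expand $[A \wedge F_A] = [A \wedge dA] + \tfrac12[A \wedge [A \wedge A]]$ and add the two pieces; the two copies of $[A \wedge dA]$ cancel, leaving
\[
  d_A F_A = \tfrac12 \bigl[A \wedge [A \wedge A]\bigr].
\]
The crux of the argument, and the step I expect to require the most care with signs, is showing that this remaining cubic term vanishes. Taking $u = v = w = A$ (all of degree $1$) in \eqref{eqn:squareAssoc} gives $[[A \wedge A] \wedge A] - [A \wedge [A \wedge A]] = [A \wedge [A \wedge A]]$, hence $[[A \wedge A] \wedge A] = 2[A \wedge [A \wedge A]]$. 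On the other hand, applying \eqref{eqn:squareCommute} to the degree-$2$ form $[A \wedge A]$ and the degree-$1$ form $A$ gives $[[A \wedge A] \wedge A] = -[A \wedge [A \wedge A]]$. Combining the two relations forces $3[A \wedge [A \wedge A]] = 0$, so $[A \wedge [A \wedge A]] = 0$ and therefore $d_A F_A = 0$. This vanishing of the triple bracket on an odd-degree form is the nonabelian analogue of $d^2 = 0$, and it is really where the Jacobi identity does its work; everything else is bookkeeping with the Leibniz and commutativity rules.
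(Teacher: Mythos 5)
Your proof is correct, and every sign checks out: the cancellation $dF_A = -[A\wedge dA]$ against the $[A\wedge dA]$ term of $[A\wedge F_A]$, and the derivation of $3[A\wedge[A\wedge A]]=0$ from \eqref{eqn:squareAssoc} and \eqref{eqn:squareCommute}, are both right. The paper states the Bianchi identity without proof (treating it as standard background, following \citet{DoKr1990}), so there is nothing to compare against; your argument is the standard one, and it is pleasant that it uses nothing beyond the identities already established in \autoref{prop:identities}.
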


We have a product rule for the exterior covariant derivative
\begin{proposition}
  If $A\in\Lambda^1(\overline\Omega,\g)$, $u\in\Lambda^k(\overline\Omega,\g)$ and $v\in\Lambda^l(\overline\Omega,\g)$, then
  \begin{equation*}
    d_A[u\wedge v]=[d_Au\wedge v]+(-1)^k[u\wedge d_Av].
  \end{equation*}
  \begin{proof}
    The Leibniz rule \eqref{eqn:squareLeibniz} gives $d[u\wedge v]=[du\wedge v]+(-1)^k[u\wedge dv]$, while \eqref{eqn:squareAssoc} implies $[A\wedge[u\wedge v]]=[[A\wedge u]\wedge v]+(-1)^k[u\wedge[A\wedge v]]$. Adding these together gives the claimed identity.
  \end{proof}

\end{proposition}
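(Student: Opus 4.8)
The plan is to unwind the definition $d_Au=du+[A\wedge u]$ and reduce the entire claim to the two real/Lie-algebra identities already in hand, namely the Leibniz rule \eqref{eqn:squareLeibniz} and the graded Jacobi identity \eqref{eqn:squareAssoc}. Writing $d_A[u\wedge v]=d[u\wedge v]+[A\wedge[u\wedge v]]$, I would handle the two summands independently and then recombine them so as to recognize $d_Au$ and $d_Av$.

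For the first summand, the Leibniz rule \eqref{eqn:squareLeibniz} gives $d[u\wedge v]=[du\wedge v]+(-1)^k[u\wedge dv]$ immediately, since $u$ has degree $k$. For the second summand $[A\wedge[u\wedge v]]$, I would apply \eqref{eqn:squareAssoc} with the three forms taken to be $A$, $u$, and $v$, of degrees $1$, $k$, and $l$ respectively; the sign $(-1)^{kl}$ appearing in \eqref{eqn:squareAssoc} is governed by the degrees of the first two entries, so here it specializes to $(-1)^{1\cdot k}=(-1)^k$, yielding $[A\wedge[u\wedge v]]=[[A\wedge u]\wedge v]+(-1)^k[u\wedge[A\wedge v]]$. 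Adding the two expansions and collecting the $du$ with the $[A\wedge u]$ term, and the $dv$ with the $[A\wedge v]$ term, gives
\[
  d_A[u\wedge v]=[(du+[A\wedge u])\wedge v]+(-1)^k[u\wedge(dv+[A\wedge v])]=[d_Au\wedge v]+(-1)^k[u\wedge d_Av],
\]
which is the claimed identity.

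I expect the only point requiring care to be the sign bookkeeping when invoking \eqref{eqn:squareAssoc}: because $A$ is a one-form it occupies the degree-$1$ slot, which is precisely what makes the resulting sign match the $(-1)^k$ produced by the Leibniz rule, so that the two expansions recombine into $d_Au$ and $d_Av$ with no leftover terms. Beyond this, the statement is a purely algebraic consequence of the two previously established identities, with no analytic content.
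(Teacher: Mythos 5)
Your proposal is correct and follows exactly the paper's argument: expand $d_A[u\wedge v]$ into $d[u\wedge v]+[A\wedge[u\wedge v]]$, apply \eqref{eqn:squareLeibniz} to the first term and \eqref{eqn:squareAssoc} (with $A$ in the degree-one slot, so $(-1)^{kl}$ specializes to $(-1)^k$) to the second, and recombine. The sign bookkeeping you flag is handled correctly.
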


Finally, we can integrate by parts using the exterior covariant derivative.
\begin{proposition}
  Let $u\in\Lambda^k(\overline\Omega,\g)$ and $v\in\Lambda^{n-k-1}(\overline\Omega,\g)$. Then
  \begin{equation*}
    \begin{split}
      \int_{\partial\overline\Omega}\langle u\wedge v\rangle&=\int_{\overline\Omega}\langle du\wedge v\rangle+(-1)^k\int_{\overline\Omega}\langle u\wedge dv\rangle\\
      &=\int_{\overline\Omega}\langle d_Au\wedge v\rangle+(-1)^k\int_{\overline\Omega}\langle u\wedge d_Av\rangle.
    \end{split}
  \end{equation*}  \begin{proof}
    The first line follows from Stokes' theorem and the Leibniz rule \eqref{eqn:angleLeibniz}. The second line follows from the fact that $\langle[A\wedge u]\wedge v\rangle+(-1)^k\langle u\wedge[A\wedge v]\rangle=0$, which is a special case of \eqref{eqn:angleAssoc}.
  \end{proof}

\end{proposition}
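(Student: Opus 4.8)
The plan is to establish the two equalities separately, treating the second as a correction to the first. For the first equality, I would note that since $u$ has degree $k$ and $v$ has degree $n-k-1$, the real-valued form $\langle u\wedge v\rangle$ has degree $n-1$, so it is exactly the kind of top-minus-one form to which Stokes's theorem applies on the bounded Lipschitz domain $\overline\Omega$. Applying Stokes's theorem gives $\int_{\partial\overline\Omega}\langle u\wedge v\rangle=\int_{\overline\Omega}d\langle u\wedge v\rangle$, and then I would expand the integrand using the Leibniz rule \eqref{eqn:angleLeibniz}, namely $d\langle u\wedge v\rangle=\langle du\wedge v\rangle+(-1)^k\langle u\wedge dv\rangle$. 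Integrating this pointwise identity over $\overline\Omega$ immediately yields the first line.

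For the second equality, I would substitute the definition $d_Au=du+[A\wedge u]$, and likewise $d_Av=dv+[A\wedge v]$, into the right-hand side and compare with the middle expression. The two differ by exactly $\int_{\overline\Omega}\bigl(\langle[A\wedge u]\wedge v\rangle+(-1)^k\langle u\wedge[A\wedge v]\rangle\bigr)$, so it suffices to show the integrand vanishes pointwise. This is where \eqref{eqn:angleAssoc} enters: taking the three forms there to be $A$, $u$, $v$ with degrees $1$, $k$, $n-k-1$, the sign $(-1)^{kl}$ becomes $(-1)^{1\cdot k}=(-1)^k$, and the identity reads precisely $\langle[A\wedge u]\wedge v\rangle+(-1)^k\langle u\wedge[A\wedge v]\rangle=0$, giving the cancellation.

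The computation is short, so there is no serious obstacle; the one place demanding care is the sign bookkeeping when specializing \eqref{eqn:angleAssoc}. I would double-check that $A$ occupies the first slot there, so that the exponent of $-1$ is the product $\deg A\cdot\deg u=1\cdot k=k$, matching the $(-1)^k$ coming from the Leibniz rule. A secondary point worth verifying is that no boundary term is hidden in the passage from $d$ to $d_A$: since both forms of the identity share the same left-hand side $\int_{\partial\overline\Omega}\langle u\wedge v\rangle$, the cancellation must be purely algebraic, which is consistent with $[A\wedge\cdot]$ being a zeroth-order (non-differential) operation that produces no additional boundary contribution.
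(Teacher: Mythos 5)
Your proof is correct and follows exactly the paper's argument: Stokes's theorem plus the Leibniz rule \eqref{eqn:angleLeibniz} for the first line, and the vanishing of $\langle[A\wedge u]\wedge v\rangle+(-1)^k\langle u\wedge[A\wedge v]\rangle$ as a special case of \eqref{eqn:angleAssoc} for the second. Your sign check (the exponent $kl$ specializing to $1\cdot k=k$ with $A$ in the first slot) is the right detail to verify and comes out as you say.
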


\subsection{Electric and magnetic fields}
In order to define the Yang--Mills analogues of the scalar and vector potentials and the electric and magnetic fields, we will need some regularity assumptions. We define the following spaces
\begin{definition}
  Let
  \begin{align*}
    V^0&=\left\{\phi\in L^\infty\Lambda^0(\overline\Omega,\g) : d\phi\in L^4\Lambda^1(\overline\Omega,\g)\right\},\\
    V^1&=\left\{A\in L^4\Lambda^1(\overline\Omega,\g) : dA\in L^2\Lambda^2(\overline\Omega,\g)\right\}.
  \end{align*}
  We let $\mathring V^0$ and $\mathring V^1$ denote the subspaces of $V^0$ and $V^1$ containing those forms $\phi$ and $A$, respectively, whose tangential traces vanish on the boundary of $\overline\Omega$ in the sense of \cite{We2004}. In the smooth setting, $\mathring V^0$ contains those scalar fields that vanish on the boundary, and, in terms of vector proxies, $\mathring V^1$ contains those vector fields that are normal to the boundary.
\end{definition}

The regularity assumptions on $A$ ensure that $F_A\in L^2\Lambda^2(\overline\Omega,\g)$. The regularity assumptions on $\phi$ ensure that $d_A\phi\in V^1$ for $A\in V^1$, which will be necessary later to show charge conservation. See Equation \eqref{ym_weak_charge_conservation} and \autoref{prop:temporalimpliesgeneral}.

We can now define the Yang--Mills analogues of the scalar and vector potentials, the electric field, and the magnetic flux density. Note that we still refer to these as ``scalar'' and ``vector'' potentials, even though they are actually $\mathfrak{g}$-valued forms in this generalized setting. Here and henceforth, we employ the commonly-used ``dot'' notation for partial differentiation with respect to time, e.g., $ \dot{ A } $ means $ \partial _t A $.

\begin{definition}
  Let the \emph{scalar potential} $\phi$ be a $C^0$ curve in $V^0$ and let the \emph{vector potential} $A$ be a $C^1$ curve in $V^1$. Then define the \emph{electric field} $E$ and \emph{magnetic flux density} $B$ by
  \begin{align*}
    E &:= -(\dot A+d_A\phi), & B &:= F_A.
  \end{align*}
\end{definition}

From this, we immediately see that $E\in L^4\Lambda^1(\overline\Omega,\g)$ and $B\in L^2\Lambda^2(\overline\Omega,\g)$.

\begin{example}
  Recall that in the setting of electromagnetism with $G=U(1)$, a $\g$-valued one-form is a real-valued one-form times the imaginary unit $i$. By omitting the imaginary unit and converting the one-form to a vector field, we obtain a correspondence between the vector potential $A$ expressed as a $\g$-valued one-form and the vector potential $A$ expressed clasically as a vector field. Similarly, the scalar potential $\phi$ in this notation is a function with purely imaginary values. By omitting the imaginary unit, we obtain the usual real-valued scalar potential.

  Recall that when $G=U(1)$, we have $F_A=dA$ and $d_A=d$, so the equations for $E$ and $B$ simplify to $E=-(\dot A+d\phi)$ and $B=dA$. Converting these differential forms to vector fields, we obtain the usual equations $E=-(\dot A+\grad\phi)$ and $B=\curl A$.
\end{example}

Using the identities $d_Ad_A\phi=[F_A,\phi]$ and $d_AF_A=0$, we obtain that
\begin{align*}
  \dot B - [\phi,B]&=d_A\dot A + d_Ad_A\phi=-d_AE\\
  d_AB&=0.
\end{align*}
In the setting of electromagnetism, these equations correspond to the Maxwell equations $\dot B=-\curl E$ and $\Div B=0$.

To define the electric flux density $D$ and the magnetic field $H$, we utilize the electric permittivity tensor $\epsilon$ and magnetic permeability tensor $\mu$ of \autoref{def:epsilon_mu}. We assume that both $\epsilon$ and $\mu^{-1}$ are $L^\infty$ maps.

\begin{definition}
  Let
  \begin{align*}
    D &:= \epsilon E\in L^4\Lambda^{n-1}(\overline\Omega,\g)\\
    H &:= \mu^{-1} B\in L^2\Lambda^{n-2}(\overline\Omega,\g).
  \end{align*}
\end{definition}
From these definitions, $D$ and $H$ need only be $C^0$ curves in $L^*\Lambda^*(\overline\Omega,\g)$. We make the stronger assumption that $D$ is in fact a $C^1$ curve in $L^4\Lambda^{n-1}(\overline\Omega,\g)$.

\subsection{The Yang--Mills Lagrangian}

For this discussion, we will set the current $J$ to be zero, and we will view the charge density $\rho$ as a $C^1$ curve in $L^1\Lambda^n(\overline\Omega,\g)$. (The generalization to nonzero current is discussed in \autoref{sec:current}.)
\begin{definition}
  The \emph{Yang--Mills Lagrangian} is
  \begin{equation}\label{eq:ym_lagrangian}
    L(A,\phi,\dot A,\dot\phi):=\int_{\overline\Omega}\left(\frac12\langle E\wedge D\rangle-\frac12\langle B\wedge H\rangle - \langle\phi,\rho\rangle\right),
  \end{equation}
  where, as before, $E:=-(\dot A+d_A\phi)$, $B:=F_A$, $D:=\epsilon E$, and $H:=\mu^{-1}B$. 
\end{definition}
Note that each term in the Lagrangian is a real-valued $n$-form in at least the $L^1$ Lebesgue space, so we can indeed integrate this expression over $\overline\Omega$.

The Euler--Lagrange equations are
\begin{subequations}\label{eq:ym_el}
  \begin{alignat}{2}
    \int_{\overline\Omega}\left(\left\langle A'\wedge(\dot D-[\phi,D])\right\rangle - \left\langle d_AA'\wedge H\right\rangle\right)&=0,\qquad&\forall A'\in\mr V^1,\label{eq:ym_el_a}\\
    \int_{\overline\Omega}\left(\left\langle d_A\phi'\wedge D\right\rangle+\left\langle\phi',\rho\right\rangle\right)&=0,\qquad&\forall\phi'\in\mr V^0.\label{eq:ym_el_phi}
  \end{alignat}
\end{subequations}

These are weak expressions of the Yang--Mills equations
\begin{subequations}\label{eq:ym_strong}
\begin{align}
  \dot D-[\phi, D]&=d_AH,\\
  d_AD&=\rho.
\end{align}
\end{subequations}

\begin{example}
  In the setting of electromagnetism with $G=U(1)$, recall that $[\cdot,\cdot]=0$ and that $d_A=d$. Thus, the Yang--Mills equations in this context are
  \begin{align*}
    \dot D&=dH,&dD&=\rho,
  \end{align*}
  which are differential form expressions of Maxwell's equations,
  \begin{align*}
    \dot D&=\curl H,&\Div D&=\rho.
  \end{align*}
\end{example}

The Yang--Mills equations imply a charge conservation law.
\begin{proposition}\label{prop:rho_evolution}
  Equations \eqref{eq:ym_strong} imply that $\rho$ satisfies
  \begin{equation*}
    \dot\rho=[\phi,\rho].
  \end{equation*}
  In particular $\abs\rho$ is conserved.
  \begin{proof}
    We compute
    \begin{equation*}
      \begin{split}
        \dot\rho&=\frac d{dt}\left(d_AD\right)\\
        &=d_A\dot D+[\dot A\wedge D]\\
        &=d_Ad_AH+d_A[\phi,D]+[\dot A\wedge D]\\
        &=[F_A\wedge H]+[d_A\phi\wedge D]+[\phi,d_AD]-[E\wedge D]-[d_A\phi\wedge D]\\
        &=[B\wedge \mu^{-1}B]+[\phi,\rho]-[E\wedge \epsilon E]\\
        &=[\phi,\rho].
      \end{split}
    \end{equation*}
    Then,
    \begin{equation*}
      \dd t\abs\rho^2\,d\mathrm{vol}=\dd t\langle\rho\wedge*\rho\rangle=2\langle\dot\rho\wedge *\rho\rangle=2\langle[\phi\wedge\rho]\wedge *\rho\rangle=2\langle\phi\wedge[\rho\wedge*\rho]\rangle=0.
    \end{equation*}
  \end{proof}

\end{proposition}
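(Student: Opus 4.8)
The plan is to differentiate the constraint $d_A D = \rho$ in time, substitute the evolution equation $\dot D - [\phi, D] = d_A H$, and reduce everything to the algebraic antisymmetry identities $[E \wedge \epsilon E] = 0$ and $[B \wedge \mu^{-1} B] = 0$. The first step is to compute $\dot\rho = \frac{d}{dt}(d_A D)$. The essential subtlety is that $d_A = d + [A \wedge \cdot\,]$ depends on time through the connection $A$, so the time derivative does not commute with $d_A$; instead the product rule produces an extra bracket term, giving $\dot\rho = d_A \dot D + [\dot A \wedge D]$. Substituting $\dot D = d_A H + [\phi, D]$ then yields $\dot\rho = d_A d_A H + d_A[\phi, D] + [\dot A \wedge D]$.

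Next I would simplify the three terms using the identities established above. The first becomes $d_A d_A H = [F_A \wedge H]$. The second, by the product rule for $d_A$ applied to the bracket of a $0$-form with an $(n-1)$-form, expands as $d_A[\phi, D] = [d_A\phi \wedge D] + [\phi, d_A D] = [d_A\phi \wedge D] + [\phi, \rho]$, using the constraint once more. Recalling $E = -(\dot A + d_A\phi)$, the two remaining $D$-terms combine as $[d_A\phi \wedge D] + [\dot A \wedge D] = -[E \wedge D]$. At this point $\dot\rho = [F_A \wedge H] - [E \wedge D] + [\phi, \rho]$, and the bulk of the work is done: since $H = \mu^{-1} B = \mu^{-1} F_A$ and $D = \epsilon E$, the antisymmetry properties give $[F_A \wedge H] = [B \wedge \mu^{-1} B] = 0$ and $[E \wedge D] = [E \wedge \epsilon E] = 0$, leaving $\dot\rho = [\phi, \rho]$.

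For conservation of $\abs\rho$, I would differentiate $\abs\rho^2 \, d\mathrm{vol} = \langle \rho, *\rho\rangle$ in time to get $2\langle\dot\rho, *\rho\rangle = 2\langle[\phi,\rho]\wedge *\rho\rangle$, and then use the invariance of $\langle\cdot,\cdot\rangle$ to move $\phi$ across the pairing, rewriting this as $2\langle\phi, [\rho \wedge *\rho]\rangle$. The key observation is that $[\rho \wedge *\rho]$ vanishes: pointwise $\rho$ lives in the one-dimensional space $\bigwedge^n T^*_x\overline\Omega \otimes \g$, so it factors as a scalar $n$-form times a single Lie algebra element $r$, and $*\rho$ carries the same element $r$; since $[r, r] = 0$, the bracket is zero, and $\abs\rho$ is therefore constant in time.

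I expect the main obstacle to be the first step, namely correctly accounting for the time dependence of $A$ inside $d_A$ so that the extra term $[\dot A \wedge D]$ is retained. This term is precisely what combines with $[d_A\phi \wedge D]$ to reconstruct $-[E \wedge D]$, so omitting it would destroy the cancellation; once it is in hand, the remaining steps are purely algebraic and rest only on the identities already proved.
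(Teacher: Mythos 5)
Your proposal is correct and follows essentially the same route as the paper: differentiate $d_AD$ with the extra $[\dot A\wedge D]$ term from the time-dependence of the connection, substitute the evolution equation, and cancel via the antisymmetry identities $[E\wedge\epsilon E]=0$ and $[B\wedge\mu^{-1}B]=0$, then handle $\abs\rho$ via the invariance of the inner product and $[\rho\wedge*\rho]=0$. Your explicit justification that $[\rho\wedge*\rho]=0$ (via the one-dimensionality of $\bigwedge^nT^*_x\overline\Omega$) is a detail the paper leaves implicit, but the argument is the same.
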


\subsection{Gauge symmetry}

\begin{definition}
  A \emph{gauge transformation} is a time-dependent $G$-valued field on $\overline\Omega$. That is, a gauge transformation is a function $g\colon\overline\Omega\times\mathbb R\to G$. A gauge transformation acts on the vector and scalar potentials by the transformation
  \begin{equation*}
    g\colon(A,\phi)\mapsto\left(gAg^{-1}-(dg)g^{-1},g\phi g^{-1}+\dot gg^{-1}\right)
  \end{equation*}
\end{definition}

To explain the notation, recall that we view $G$ and $\g$ as subsets of matrices, so $g(\alpha\otimes\xi)g^{-1}$ means $\alpha\otimes g\xi g^{-1}$, where the expression $g\xi g^{-1}$ is matrix multiplication. Meanwhile, fixing a point in time and viewing $g$ as a map $\overline\Omega\to G$, we take the derivative to obtain a map $dg\colon T_x\overline\Omega\to T_gG$. Thus we can view $dg$ as a $T_gG$-valued one-form, and so $(dg)g^{-1}$ is a one-form with values in $T_eG=\g$. Similarly, fixing a point in space, we can view $g$ as map $\mathbb R\to G$. The velocity of this path $\dot g$ is a tangent vector $T_gG$, and, again, $\dot gg^{-1}$ is in $\g$.

\begin{example}
  In the setting of electromagnetism with $G=U(1)$, recall that a $\g$-valued $k$-form is simply a real-valued $k$-form times the imaginary unit $i$. Let $\xi$ be a scalar field on $\overline\Omega$. Then, setting $g=e^{-i\xi}$, we see that $g$ is a gauge transformation, and
  \begin{equation*}
    g\colon(iA,i\phi)\mapsto\left(i(A+d\xi), i(\phi-\dot\xi)\right),
  \end{equation*}
  matching the formula for gauge transformations in electromagnetism. Seeing $A$ as a vector field and $\phi$ as a scalar field, this is $ ( A, \phi ) \mapsto ( A + \operatorname{grad} \xi, \phi - \dot{ \xi } ) $, leaving $E$ and $B$ invariant.
\end{example}

One can compute the resulting action of $g$ on $E$ and $B$. Unlike in the electromagnetic situation, if $G$ is a nonabelian group, then $E$ and $B$ are not invariant under gauge transformations. Instead, $g$ acts on $E$ and $B$ by conjugating the Lie algebra values.
\begin{align*}
  g&\colon E\mapsto gEg^{-1},&g&\colon B\mapsto gBg^{-1}.
\end{align*}
However, because $\langle g\xi g^{-1},g\eta g^{-1}\rangle=\langle \xi,\eta\rangle$ for $\xi,\eta\in\g$, the expressions $\langle E\wedge D\rangle$ and $\langle B\wedge H\rangle$ in the Lagrangian are invariant under the action of gauge transformations. Thus, provided we transform $\rho\mapsto g\rho g^{-1}$, we obtain another solution to the Yang--Mills equations.

\subsection{Temporal gauge}

By applying a gauge transformation, we can set the scalar potential $\phi$ to zero. More precisely, we solve the linear differential equation
\begin{equation*}
  \dot g = -g\phi
\end{equation*}
for $g$. This gauge transformation sends $(A,\phi)$ to $(gAg^{-1}-(dg)g^{-1},0)$.

Restricting to the case $\phi=0$, called \emph{temporal gauge}, we now have
\begin{align}\label{eqn:ym_eb}
  E&=-\dot A,&B&=F_A.
\end{align}
The Lagrangian becomes
\begin{equation*}
  L(A,\dot A):=\int_{\overline\Omega}\left(\frac12\langle E\wedge D\rangle-\frac12\langle B\wedge H\rangle\right).
\end{equation*}
The corresponding Euler--Lagrange equations are
\begin{equation}\label{eqn:ym_el}
  \int_{\overline\Omega}\left(\left\langle A'\wedge\dot D\right\rangle - \left\langle d_AA'\wedge H\right\rangle\right)=0,\qquad\forall A'\in\mr V^1.
\end{equation}
This is a weak form of the equation
\begin{equation}\label{eqn:ym_strong}
  \dot D=d_AH.
\end{equation}

Setting $\rho=d_AD$, we see that $\rho$ is constant by \autoref{prop:rho_evolution} with $\phi=0$. However, when we discretize, we will find the following variational-principle-based proof of this fact more helpful. For all $\phi'\in\mr V^0$, we have that $A'=d_A\phi'\in\mr V^1$, so plugging this value of $A'$ into \eqref{eqn:ym_el}, we find
\begin{equation}\label{ym_weak_charge_conservation}
  \begin{split}
  0&=\int_{\overline\Omega}\left(\left\langle d_A\phi'\wedge\dot D\right\rangle-\left\langle d_Ad_A\phi'\wedge H\right\rangle\right)\\
  &=\int_{\overline\Omega}\left(-\left\langle\phi', d_A\dot D\right\rangle-\left\langle[B,\phi']\wedge H\right\rangle\right)\\
  &=\int_{\overline\Omega}\left(-\left\langle\phi',\dd t(d_AD)\right\rangle+\left\langle\phi',[\dot A\wedge D]\right\rangle+\left\langle\phi',[B\wedge H]\right\rangle\right)\\
  &=\int_{\overline\Omega}\left\langle\phi',-\dd t(d_AD)-[E\wedge\epsilon E]+[B\wedge\mu^{-1}B]\right\rangle,\\
  &=\int_{\overline\Omega}\left\langle\phi',-\dd t(d_AD)\right\rangle.
\end{split}
\end{equation}
Thus, $\dd t(d_AD)=0$.

In vacuum using Gaussian units, both $\epsilon$ and $\mu$ are the Hodge star $*$, and by taking the Hodge star of \eqref{eqn:ym_strong} and substituting $D=*E=-*\dot A$ and $H=*B=*F_A$, we obtain the standard formulation of the time-dependent Yang--Mills equation
\begin{equation*}
  \ddot A=-*d_A*F_A=-d_A^*F_A.
\end{equation*}

\begin{remark}
  One may ask about other choices of gauge, such as Coulomb gauge or Lorentz gauge. The issue is that, unlike in the linear setting of electromagnetism, once we have a nonlinear problem, it may not be possible to gauge transform a given connection into Coulomb gauge; that is, given $A$, there may not be a solution $g$ to the nonlinear equation $d^*\left(gAg^{-1}-(dg)g^{-1}\right)=0$. Indeed, a seminal paper of \citet{u82} shows with some difficulty that such a gauge transformation exists, provided that the energy $\norm{F_A}_{L^2(\Omega)}^2$ is sufficiently small, which allows a reduction to the linear problem via the implicit function theorem. To make use of this fact, one would need to adaptively refine the mesh so to ensure that $\norm{F_A}_{L^2(K)}^2$ is sufficiently small on each element $K$, giving a local gauge transformation $g\rvert_K$ that transforms the connection into Coulomb gauge on $K$. We believe that this adaptive mesh refinement and gauge fixing would be a fruitful direction for further investigation that would be especially useful when simulating the Yang--Mills equations for high-energy connections, but it is beyond the scope of the current paper.
\end{remark}

\subsection{Galerkin semidiscretization}
To find numerical solutions to the Yang--Mills equations, we apply Galerkin semidiscretization by restricting the trial functions $A$ and test functions $A'$ in \eqref{eqn:ym_el} to a finite dimensional subspace $V^1_h\subset \mr V^1$. That is, we seek a curve $A_h\colon t\mapsto A_h(t)\in \mr V^1_h$ such that
\begin{equation}\label{eqn:ym_galerkin_el}
  \int_{\overline\Omega}\left(\left\langle A'_h\wedge\dot D_h\right\rangle - \left\langle d_{A_h}A'_h\wedge H_h\right\rangle\right)=0,\qquad\forall A'_h\in\mr V^1_h.
\end{equation}
Here, as in \eqref{eqn:ym_eb}, we define $E_h:=-\dot A_h$, $B_h:=F_{A_h}$, and we define $D_h:=\epsilon E_h$ and $H_h=\mu^{-1}B_h$.

Unlike the corresponding situation for Maxwell's equations, \eqref{eqn:ym_galerkin_el} is a \emph{nonlinear} finite-dimensional system of ODEs, since $F_{A_h}$ contains the quadratic term $ [A _h \wedge A _h] $ and since $A_h$ appears in $d_{A_h}A_h'$.

We would like to show that $\rho_h:=d_{A_h}D_h$ is conserved, at least in some weak sense. We still have that $[\dot A_h\wedge D_h]=-[E_h\wedge\epsilon E_h]=0$. Thus, $\dot\rho_h=d_{A_h}\dot D_h+[\dot A_h\wedge D_h]=d_{A_h}\dot D_h$. However, showing that $d_{A_h}\dot D_h$ vanishes even in a weak sense cannot be done the same way as with Maxwell's equations.

As in \eqref{ym_weak_charge_conservation}, we would like to plug $A'_h=d_{A_h}\phi'_h$ into \eqref{eqn:ym_galerkin_el}, but the requirement that $A'_h$ be in $\mr V^1_h$ is difficult to satisfy because of the $[A_h,\phi_h']$ term in $d_{A_h}\phi'_h$. In general, if $\mr V^1_h$ is a space of piecewise polynomials of degree $r$, then $A_h$ will have degree $r$, so $[A_h,\phi_h']$ will generally have degree higher than $r$, and thus be an invalid choice of $A'_h$.

As noted by \citet{ChWi2006}, there is a valid choice of $\phi_h'$, namely, constant $\g$-valued functions on ${\overline\Omega}$, giving us the conservation law
\begin{equation*}
  \int_{\overline\Omega}\left\langle\phi_h',d_{A_h}\dot D_h\right\rangle=0\qquad{\text{for any constant }\phi'_h\in\g}.
\end{equation*}
In other words, the total charge $\int_{\overline\Omega}\rho_h$ on the whole domain ${\overline\Omega}$ is conserved. However, we'd like to have local charge conservation, a much stronger condition.

\section{The domain-decomposed Yang--Mills equations}\label{sec:domaindecomposition}
\subsection{Domain decomposition}
Roughly speaking, the challenge we faced above is that $\phi_h'$ had to be constant, but to get local charge conservation, we needed $\phi_h'$ to be supported on a small region. With domain decomposition, we can resolve this issue by allowing discontinuous test functions. With a discontinuous \emph{locally} constant $\phi_h'$, we can get local charge conservation.

We decompose our domain ${\overline\Omega}\subset\mathbb R^n$ using a triangulation $\T_h$ and define discontinuous function spaces with respect to this triangulation.
\begin{definition}
  Let
    \begin{align*}
    DV^0&=\left\{\phi\in L^\infty\Lambda^0({\overline\Omega},\g) : d(\phi\rvert_K)\in L^4\Lambda^1(K,\g)\text{ for all }K\in\T_h\right\},\\
    DV^1&=\left\{A\in L^4\Lambda^1({\overline\Omega},\g) : d(A\rvert_K)\in L^2\Lambda^2(K,\g)\text{ for all }K\in\T_h\right\}.
  \end{align*}
\end{definition}
That is, $DV^0$ and $DV^1$ are discontinuous versions of the spaces $V^0$ and $V^1$; the exterior derivatives are only defined after we restrict to a particular element $K$ of the triangulation.

Via Lagrange multipliers, we can characterize when a discontinuous form in $DV^0$ or $DV^1$ is actually ``continuous'' in the sense of being in $V^0$ or $V^1$ respectively, analogously to how it is done in \citep[Section III.1.2]{BrFo1991} for scalar fields. We define our spaces of Lagrange multipliers.
\begin{definition}
  Let
  \begin{align*}
    \V^{n-1}&=\left\{\wh D\in L^{4/3}\Lambda^{n-1}({\overline\Omega},\g) : d\wh D\in L^1\Lambda^n({\overline\Omega},\g)\right\},\\
    \V^{n-2}&=\left\{\wh H\in L^2\Lambda^{n-2}({\overline\Omega},\g) : d\wh H\in L^{4/3}\Lambda^{n-1}({\overline\Omega},\g)\right\}.
  \end{align*}
\end{definition}

The level of regularity in these definitions is chosen so that $\int_{\partial K}\langle\phi,\wh D\rangle$ and $\int_{\partial K}\langle A\wedge\wh H\rangle$ are well-defined for $K\in\T_h$, $\phi\in DV^0$, $A\in DV^1$, $\wh D\in\V^{n-1}$ and $\wh H\in\V^{n-2}$ via the formula
\begin{equation*}
  \int_{\partial K}\langle u\wedge\lambda\rangle=\int_K\left(\langle du\wedge\lambda\rangle + (-1)^k\langle u\wedge d\lambda\rangle\right).
\end{equation*}
Each term is in $L^1$ via H\"older's inequality. See also \cite{MiMiSh2008,We2004}.

\begin{proposition}\label{prop:ym_continuity}
  Let $\phi\in DV^0$. Then $\phi\in\mr V^0$ if and only if
  \begin{equation*}
    \sum_{K\in\T_h}\int_{\partial K}\langle \phi,\wh D\rangle=0
  \end{equation*}
  for all $\wh D\in \V^{n-1}$.

  Likewise, let $A\in DV^1$. Then $A\in\mr V^1$ if and only if
    \begin{equation*}
    \sum_{K\in\T_h}\int_{\partial K}\langle A\wedge\wh H\rangle=0
  \end{equation*}
  for all $\wh H\in\V^{n-2}$.

  \begin{proof}
    For $k=1,2$, let $u\in V^k$. Then for $\lambda\in \V^{n-k-1}$, we have
    \begin{equation}\label{eq:domain_decomposition}
      \begin{split}
        \sum_{K\in\T_h}\int_{\partial K}\langle u\wedge\lambda\rangle&=\sum_{K\in\mathcal T_h}\int_K\left(\langle du\wedge\lambda\rangle + (-1)^k\langle u\wedge d\lambda\rangle\right)\\
        &=\int_{\overline\Omega}\left(\langle du\wedge\lambda\rangle + (-1)^k\langle u\wedge d\lambda\rangle\right)\\
        &=\int_{\partial{\overline\Omega}}\langle u\wedge\lambda\rangle.
      \end{split}
    \end{equation}
    In particular, if $u\in\mr V^k$, then this expression is zero as claimed.

    Conversely, assume that $\phi\in DV^0$ and that $\sum_{K\in\T_h}\int_{\partial K}\langle \phi,\wh D\rangle=0$ for all $\wh D\in\V^{n-1}$. We can define $d\phi$ as a distribution on ${\overline\Omega}$. To show that $d\phi\in L^4\Lambda^{k+1}({\overline\Omega},\g)$, let $\wh D\in\V^{n-1}$ have vanishing trace on $\partial{\overline\Omega}$. By the definition of the distributional derivative, we have
    \begin{equation*}
      \int_{\overline\Omega}\langle d\phi\wedge\wh D\rangle=-\int_{\overline\Omega}\langle\phi, d\wh D\rangle.
    \end{equation*}
    Computing further, using the fact that $\phi\in L^\infty\Lambda^0({\overline\Omega},\g)$, $d\wh D\in L^1\Lambda^{n-1}({\overline\Omega},\g)$, and $d(\phi\vert_K)\in L^4\Lambda^k(K,\g)$, we have that
    \begin{equation*}
      \begin{split}
        -\int_{\overline\Omega}\langle \phi, d\wh D\rangle&=-\sum_{K\in\T_h}\int_K\langle\phi, d\wh D\rangle\\
        &=\sum_{K\in\T_h}\int_K\langle d\phi\wedge\wh D\rangle-\sum_{K\in\T_h}\int_{\partial K}\langle \phi,\wh D\rangle\\
        &=\sum_{K\in\T_h}\int_K\langle d\phi\wedge\wh D\rangle.
      \end{split}
    \end{equation*}
    Using H\"older's inequality, we can bound this expression by
    \begin{equation*}
      \begin{split}
        \abs{\sum_{K\in\T_h}\int_K\langle d\phi\wedge\wh D\rangle}&\le\sum_{K\in\T_h}\abs{\int_K\langle d\phi\wedge\wh D\rangle}\\
        &\le\sum_{K\in\T_h}\norm{d\phi}_{L^4\Lambda^1(K,\g)}\norm{\wh D}_{L^{4/3}\Lambda^{n-1}(K,\g)}\\
        &\le\left(\sum_{K\in\T_h}\norm{d\phi}_{L^4\Lambda^1(K,\g)}^4\right)^{1/4}\left(\sum_{K\in\T_h}\norm{\wh D}_{L^{4/3}\Lambda^{n-1}(K,\g)}^{4/3}\right)^{3/4}\\
        &=\left(\sum_{K\in\T_h}\norm{d\phi}_{L^4\Lambda^1(K,\g)}^4\right)^{1/4}\norm{\wh D}_{L^{4/3}\Lambda^{n-1}({\overline\Omega},\g)}.
      \end{split}
    \end{equation*}
    We conclude that the functional $\wh D\mapsto\int_{\overline\Omega}\langle d\phi\wedge\wh D\rangle$ is bounded on $L^{4/3}\Lambda^{n-1}({\overline\Omega},\g)$, so $d\phi\in L^4\Lambda^1({\overline\Omega},\g)$, as desired. We conclude that $\phi\in V^0$.

    Likewise, assume that $A\in DV^1$ and that $\sum_{K\in\T_h}\int_{\partial K}\langle A\wedge\wh H\rangle=0$ for all $\wh H\in\V^{n-2}$. We define $dA$ as a distribution on ${\overline\Omega}$, and in the same way that we computed for $\phi$, we can compute that for all $\wh H\in\V^{n-2}$ with vanishing trace, we have
    \begin{equation*}
      \int_{\overline\Omega}\langle dA\wedge\wh H\rangle=\sum_{K\in\T_h}\int_K\langle dA\wedge\wh H\rangle.
    \end{equation*}
    Like we did for $\phi$, we can bound this expression using the Cauchy--Schwarz inequality.
    \begin{equation*}
      \abs{\sum_{K\in\T_h}\int_K\langle dA\wedge\wh H\rangle}\le\left(\sum_{K\in T_h}\norm{dA}^2_{L^2\Lambda^2(K,\g)}\right)^{1/2}\norm{\wh H}_{L^2\Lambda^{n-2}({\overline\Omega},\g)}.
    \end{equation*}
    We conclude that the functional $\wh H\mapsto\int_{\overline\Omega}\langle dA\wedge\wh H\rangle$ is bounded on $L^2\Lambda^{n-2}({\overline\Omega},\g)$, so $dA\in L^2\Lambda^2({\overline\Omega},\g)$, as desired. We conclude that $A\in V^1$.

    We've shown that $\phi\in V^0$ and $A\in V^1$. It remains to show that their traces are zero. For $k=0,1$, considering $\lambda\in\V^{n-k-1}$, not necessarily traceless, we have by Equation \eqref{eq:domain_decomposition} and the assumption that $\sum_{K\in\T_h}\int_{\partial K}\langle u\wedge\lambda\rangle=0$ that $\int_{\partial{\overline\Omega}}\langle u\wedge\lambda\rangle=0$ for all $\lambda$, so $u$ is traceless.
  \end{proof}
\end{proposition}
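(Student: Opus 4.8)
The plan is to prove both equivalences by a single two-step strategy, treating the scalar case (with $\phi$ and $\wh D$) and the vector case (with $A$ and $\wh H$) in parallel, since the only difference lies in the degree $k$ and in the conjugate exponents of the relevant Lebesgue spaces. Throughout I would write $u$ for $\phi$ or $A$ and $\lambda$ for $\wh D$ or $\wh H$, so that the pairing under consideration is $\sum_{K\in\T_h}\int_{\partial K}\langle u\wedge\lambda\rangle$ with $\lambda\in\V^{n-k-1}$.

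First I would record the elementary telescoping identity: for $u\in V^k$ and $\lambda\in\V^{n-k-1}$, summing the elementwise integration-by-parts formula over $\T_h$ and invoking Stokes's theorem cancels all the interior-face contributions, leaving
\[
  \sum_{K\in\T_h}\int_{\partial K}\langle u\wedge\lambda\rangle=\int_{\partial\overline\Omega}\langle u\wedge\lambda\rangle.
\]
This immediately yields the forward direction: if $u\in\mr V^k$, then its tangential trace on $\partial\overline\Omega$ vanishes, so the right-hand side is zero for every $\lambda$.

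For the converse, which I expect to be the crux, I would first establish global regularity and only afterwards the trace condition. Assuming the pairing vanishes for all $\lambda\in\V^{n-k-1}$, I would define $du$ as a distribution on $\overline\Omega$ and test it against those $\lambda$ whose trace on $\partial\overline\Omega$ vanishes. The hypothesis kills the boundary contribution, so the distributional pairing collapses to $\sum_{K}\int_K\langle du\wedge\lambda\rangle$, where $du$ denotes the a priori only piecewise-defined elementwise exterior derivative. A H\"older estimate---using $\phi\in L^\infty$ with $d\phi\rvert_K\in L^4$ in the scalar case, and $dA\rvert_K\in L^2$ in the vector case, together with the summability of $\lambda$ over the elements---bounds this pairing by a constant times $\norm{\lambda}$ in the appropriate global Lebesgue norm. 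By duality (the dual of $L^{4/3}$ being $L^4$, and of $L^2$ being itself) this shows that the distributional derivative is represented by a global $L^p$ form, which is precisely the membership $u\in V^k$.

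Finally, having upgraded $u$ to an element of $V^k$, the telescoping identity now holds globally, so the hypothesis forces $\int_{\partial\overline\Omega}\langle u\wedge\lambda\rangle=0$ for every $\lambda\in\V^{n-k-1}$, this time \emph{without} the traceless restriction. Since the boundary traces of such $\lambda$ range over a sufficiently rich space, this orthogonality forces the tangential trace of $u$ to vanish on $\partial\overline\Omega$, giving $u\in\mr V^k$. The single genuinely delicate point is the regularity step: one must ensure that passing from piecewise to global exterior derivatives introduces no spurious singular (jump) contributions along interior faces, and it is exactly the vanishing of the boundary pairing against traceless $\lambda$ that rules this out.
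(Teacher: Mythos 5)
Your proposal is correct and takes essentially the same route as the paper's proof: the telescoping Stokes identity for the forward direction, the distributional-derivative argument with H\"older's inequality and $L^p$--$L^{p'}$ duality for the global regularity of $du$, and the same identity applied with general (not necessarily traceless) Lagrange multipliers to conclude that the trace of $u$ vanishes. The only cosmetic difference is that you treat the $k=0$ and $k=1$ cases in unified notation, while the paper writes them out separately.
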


\subsection{The domain-decomposed Yang--Mills equations}
We now modify the Lagrangian from \eqref{eq:ym_lagrangian} to allow $A$ and $\phi$ to come from the discontinuous function spaces, and we enforce continuity through Lagrange multipliers $\wh H\in\V^{n-2}$ and $\wh D\in\V^{n-1}$. That is, let $A$ be a $C^1$ curve in $DV^1$, and let $\phi$ be a $C^0$ curve in $DV^0$. As before, we let $E=-(\dot A+d_A\phi)$ and $B=F_A$, but in this definition we take the derivative element-wise on each $K$; in general, $A$ and $\phi$ are not weakly differentiable on $\overline\Omega$ due to jumps across element boundaries.

As before, the regularity assumptions on $\phi$ and $A$ imply that $E\in L^4\Lambda^1({\overline\Omega},\g)$ and $B\in L^2\Lambda^2({\overline\Omega},\g)$, and so this implies that $D=\epsilon E\in L^4\Lambda^{n-1}({\overline\Omega},\g)$ and $H=\mu^{-1}B\in L^2\Lambda^{n-2}({\overline\Omega},\g)$. Again, we impose the additional assumption that $\dot D\in L^4\Lambda^{n-1}({\overline\Omega},\g)$. Our Lagrangian is now
\begin{multline*}
  L(A,\phi,\wh H,\wh D,\dot A,\dot\phi,\dot{\wh H},\dot{\wh D})
  =\sum_{K\in\T_h}\left(\int_K\left(\frac12\langle E\wedge D\rangle-\frac12\langle B\wedge H\rangle - \langle\phi,\rho\rangle\right)\right.\\{}+\left.\int_{\partial K}\left(\langle A\wedge\wh H\rangle+\langle\phi,\wh D\rangle\right)\right).
\end{multline*}

The Euler--Lagrange equations are then
\begin{subequations}
  \label{eq:dd_ym}
  \begin{align}
      \int_K\left(\left\langle A'\wedge(\dot D-[\phi,D])\right\rangle - \left\langle d_AA'\wedge H\right\rangle\right)+\int_{\partial K}\langle A'\wedge\wh H\rangle&=0,\quad
      \forall A'\in{} DV^1,
    \label{eqn:dd_ym_A}\\
      \int_K\left(\left\langle d_A\phi'\wedge D\right\rangle+\left\langle\phi',\rho\right\rangle\right)-\int_{\partial K}\langle\phi',\wh D\rangle&=0,
      \quad\forall\phi'\in{}DV^0,
    \label{eqn:dd_ym_phi}\\
    \sum_{K\in\T_h}\int_{\partial K}\langle A\wedge\wh H'\rangle&=0,\quad\forall\wh H'\in \V^{n-2},\label{eqn:dd_ym_continuity_A}\\
    \sum_{K\in\T_h}\int_{\partial K}\langle\phi,\wh D'\rangle&=0,\quad\forall\wh D'\in \V^{n-1},
    \label{eqn:dd_ym_continuity_phi}
  \end{align}
\end{subequations}
where \eqref{eqn:dd_ym_A} and \eqref{eqn:dd_ym_phi} hold for all $K \in \mathcal{T} _h $. We now relate these equations to the non-domain-decomposed Euler--Lagrange equations \eqref{eq:ym_el}.

\begin{proposition}\label{prop:dd_equivalent}
  $(A,\phi,\wh H,\wh D)$ is a solution to \eqref{eq:dd_ym} if and only if $(A,\phi)$ is a solution to \eqref{eq:ym_el}, $\wh H\vert_{\partial K}=H\vert_{\partial K}$, and $\wh D\vert_{\partial K}=D\vert_{\partial K}$ for all $K$, where $\vert_{\partial K}$ denotes the tangential trace of differential forms.

  \begin{proof}
    Suppose $(A,\phi,\wh H,\wh D)$ is a solution to \eqref{eq:dd_ym}. By \autoref{prop:ym_continuity}, Equations \eqref{eqn:dd_ym_continuity_A} and \eqref{eqn:dd_ym_continuity_phi} imply that $A\in \mr V^1$ and $\phi\in\mr V^0$. Also by \autoref{prop:ym_continuity}, if we take $A'\in\mr V^1$, then $\sum_{K\in\T_h}\int_{\partial K}\langle A'\wedge\wh H\rangle=0$, so if we sum Equation \eqref{eqn:dd_ym_A} over $K$, we obtain Equation \eqref{eq:ym_el_a}. Similarly, by summing Equation \eqref{eqn:dd_ym_phi} over $K$, we obtain Equation \eqref{eq:ym_el_phi}.

    It remains to show that $\wh H\vert_{\partial K}=H\vert_{\partial K}$ and $\wh D\vert_{\partial K}=D\vert_{\partial K}$. Equations \eqref{eq:ym_el} imply that $\dot D-[\phi,D]=d_AH$ and $d_AD=\rho$ in the sense of distributions. By assumption, $\dot D\in L^4\Lambda^{n-1}({\overline\Omega},\g)$. Since $\phi\in L^\infty\Lambda^0({\overline\Omega},\g)$, we conclude then that $\dot D-[\phi,D]\in L^4\Lambda^{n-1}({\overline\Omega},\g)$, so $d_AH\in L^4\Lambda^{n-1}({\overline\Omega},\g)$. Consequently, the expression $\int_{\partial K}\langle A'\wedge H\rangle$ is well-defined by the formula
    \begin{equation}\label{eq:boundary_AH}
      \int_{\partial K}\langle A'\wedge H\rangle=\int_K\left(\langle d_AA'\wedge H\rangle-\langle A'\wedge d_AH\rangle\right).
    \end{equation}
    Indeed, the first term is the product of two $L^2$ functions, so it is in $L^1(K)$, and the second term is the product two $L^4$ functions, so it is in $L^2\subset L^1$.
    
    With this equation, and substituting $d_AH$ for $\dot D-[\phi,D]$ in \eqref{eqn:dd_ym_A}, we find that
    \begin{equation*}
      -\int_{\partial K}\langle A'\wedge H\rangle+\int_{\partial K}\langle A'\wedge\wh H\rangle=0,\qquad\forall A'\in DV^1,
    \end{equation*}
    so $\wh H\vert_{\partial K}=H\vert_{\partial K}$. Likewise, substituting $d_AD$ for $\rho$ in \eqref{eqn:dd_ym_phi} and using
    \begin{equation}\label{eq:boundary_phiD}
      \int_{\partial K}\langle\phi',D\rangle=\int_K\left(\langle d_A\phi'\wedge D\rangle+\langle \phi,d_AD\rangle\right)
    \end{equation}
    gives
    \begin{equation*}
      \int_{\partial K}\langle\phi',D\rangle-\int_{\partial K}\langle\phi',\wh D\rangle=0,\qquad\forall\phi'\in DV^0,
    \end{equation*}
    so $\wh D\vert_{\partial K}=D\vert_{\partial K}$, as desired.

    Conversely, suppose $(A,\phi)$ is a solution to \eqref{eq:ym_el}. Then $\dot D-[\phi,D]=d_AH$ and $d_AD=\rho$ in the sense of distributions. By assumption, $\dot D\in L^4\Lambda^{n-1}({\overline\Omega},\g)$. Along with $\phi\in L^\infty\Lambda^0({\overline\Omega},\g)$, $A\in L^4\Lambda^1({\overline\Omega},\g)$, and $D\in L^4\Lambda^{n-1}({\overline\Omega},\g)$, we see that
    \begin{equation*}
      dH=\dot D-[\phi,D]-[A\wedge H]\in L^{4/3}\Lambda^{n-1}({\overline\Omega},\g).
    \end{equation*}
    Indeed, the first two terms are in $L^4\subset L^{4/3}$, and the last term is in $L^4\cdot L^2=L^{4/3}$. Thus, $H\in \V^{n-2}$, and so we can set $\wh H=H$. Similarly, because $\rho\in L^1\Lambda^n({\overline\Omega},\g)$, we can use $dD=\rho-[A\wedge D]$ to conclude that $D\in\V^{n-1}$, and so we can set $\wh D=D$.

    Because $A\in\mr V^1$ and $\phi\in\mr V^0$, equations \eqref{eqn:dd_ym_continuity_A} and \eqref{eqn:dd_ym_continuity_phi} hold by \autoref{prop:ym_continuity}. By substituting $d_AH$ for $\dot D-[\phi,D]$ and $H$ for $\wh H$ and using \eqref{eq:boundary_AH}, we see that \eqref{eqn:dd_ym_A} holds. Similarly, substituting $d_AD$ for $\rho$ and $D$ for $\wh D$ and using \eqref{eq:boundary_phiD}, we see that \eqref{eqn:dd_ym_phi} holds.

  \end{proof}
\end{proposition}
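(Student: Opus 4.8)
The plan is to prove the two directions of the equivalence separately, in each case leaning on \autoref{prop:ym_continuity} to dispose of the continuity constraints \eqref{eqn:dd_ym_continuity_A}--\eqref{eqn:dd_ym_continuity_phi} and on the covariant integration-by-parts formula, applied element-by-element over $K\in\T_h$, to convert the interior terms of \eqref{eqn:dd_ym_A}--\eqref{eqn:dd_ym_phi} into boundary pairings against the genuine fields $H$ and $D$.

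For the forward direction, I would first feed arbitrary $\wh H'$ and $\wh D'$ into \eqref{eqn:dd_ym_continuity_A}--\eqref{eqn:dd_ym_continuity_phi} and invoke \autoref{prop:ym_continuity} directly to conclude $A\in\mr V^1$ and $\phi\in\mr V^0$. Next, restricting the test functions in \eqref{eqn:dd_ym_A} and \eqref{eqn:dd_ym_phi} to the continuous subspaces $\mr V^1$ and $\mr V^0$, the Lagrange-multiplier boundary sums $\sum_K\int_{\partial K}\langle A'\wedge\wh H\rangle$ and $\sum_K\int_{\partial K}\langle\phi',\wh D\rangle$ vanish by that same proposition; summing the interior terms over $K$ then reassembles the global integrals and recovers \eqref{eq:ym_el} exactly. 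To pin down the multipliers, I would use that \eqref{eq:ym_el} forces the strong equations \eqref{eq:ym_strong} distributionally, substitute $d_AH$ for $\dot D-[\phi,D]$ back into \eqref{eqn:dd_ym_A}, and rewrite the resulting interior integral as $\int_{\partial K}\langle A'\wedge H\rangle$ via covariant integration by parts on $K$. This leaves $\int_{\partial K}\langle A'\wedge(\wh H-H)\rangle=0$ for all $A'\in DV^1$; since $A'$ ranges freely over $DV^1$, whose elements have unconstrained tangential traces on each $\partial K$, and the boundary wedge pairing is nondegenerate, the traces of $\wh H$ and $H$ must agree. The identical argument with \eqref{eqn:dd_ym_phi}, the relation $d_AD=\rho$, and the $0$-form integration-by-parts formula gives $\wh D\vert_{\partial K}=D\vert_{\partial K}$.

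For the converse, starting from a solution of \eqref{eq:ym_el}, I would again extract the strong forms $\dot D-[\phi,D]=d_AH$ and $d_AD=\rho$, and then carry out the regularity bookkeeping that lets me take $\wh H=H$ and $\wh D=D$: writing $dH=(\dot D-[\phi,D])-[A\wedge H]$ and $dD=\rho-[A\wedge D]$, Hölder's inequality (using $\dot D\in L^4$, $\phi\in L^\infty$, $A\in L^4$, $D\in L^4$, $H\in L^2$, $\rho\in L^1$) places $dH\in L^{4/3}$ and $dD\in L^1$, so $H\in\V^{n-2}$ and $D\in\V^{n-1}$. Since $A\in\mr V^1$ and $\phi\in\mr V^0$, \autoref{prop:ym_continuity} yields \eqref{eqn:dd_ym_continuity_A}--\eqref{eqn:dd_ym_continuity_phi}, and substituting $H$ for $\wh H$ and integrating by parts on $K$ turns \eqref{eqn:dd_ym_A} into a pair of cancelling boundary integrals, verifying it; likewise \eqref{eqn:dd_ym_phi} follows with $\wh D=D$.

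I expect the main obstacle to be the analytic justification that the boundary pairings $\int_{\partial K}\langle A'\wedge H\rangle$ and $\int_{\partial K}\langle\phi',D\rangle$ are meaningful, which is exactly where the standing assumption $\dot D\in L^4\Lambda^{n-1}(\overline\Omega,\g)$ earns its keep: it is what upgrades $d_AH=\dot D-[\phi,D]$ to an $L^4$ (hence $L^{4/3}$) form and thereby makes the element-wise integration by parts legitimate against the merely $L^4$ test form $A'$. Everything else---the reassembly of global integrals and the trace-matching---is then a routine consequence of \autoref{prop:ym_continuity} and the nondegeneracy of the wedge pairing on $\partial K$.
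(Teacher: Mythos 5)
Your proposal is correct and follows essentially the same route as the paper's proof: both directions use \autoref{prop:ym_continuity} to handle the continuity constraints, pass to the distributional strong equations, and use element-wise covariant integration by parts (justified by the $\dot D\in L^4$ assumption upgrading $d_AH$ to $L^4$) to match the traces of $\wh H$ with $H$ and $\wh D$ with $D$. No substantive differences from the paper's argument.
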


\subsection{Domain decomposition in temporal gauge}

If $(A,\phi,\wh H,\wh D)$ is a solution to \eqref{eq:dd_ym}, then we can apply a gauge transformation $g$ to get a solution
\begin{equation*}
  \left(gAg^{-1}-(dg)g^{-1},g\phi g^{-1}+\dot gg^{-1},g\wh Hg^{-1},g\wh Dg^{-1}\right)
\end{equation*}
of \eqref{eq:dd_ym} with $\rho$ replaced by $g\rho g^{-1}$.

To ensure that this solution is in $DV^1\times DV^0\times \V^{n-2}\times \V^{n-1}$, it suffices to assume that $dg\in L^4\Lambda^1({\overline\Omega},\g)$ and $\dot g\in L^\infty\Lambda^0({\overline\Omega},\g)$, as we already have that $g\in L^\infty\Lambda^0({\overline\Omega},\g)$ because the group $G$ is compact.

As discussed above, we can apply a gauge transformation so that $\phi=0$ by solving $\dot g=-g\phi$. Note, however, that the situation is slightly more delicate because we need $g\in V^0$ whereas, \emph{a priori}, $\phi$ is only in $DV^0$; we must use that $\phi\in\mr V^0$ by \eqref{eqn:dd_ym_continuity_phi}.

Setting $\phi$ to zero gives us a simpler Lagrangian,
\begin{equation*}
  L(A, \wh H, \dot A, \dot{\wh H})=\sum_{K\in\T_h}\left(\int_{\overline\Omega}\left(\frac12\langle E\wedge D\rangle-\frac12\langle B\wedge H\rangle\right)+\int_{\partial K}\left(\langle A\wedge\wh H\rangle\right)\right).
\end{equation*}
The Euler--Lagrange equations then simplify to
\begin{subequations}
  \label{eq:dd_ym_temporal}
  \begin{align}
      \int_K\left(\left\langle A'\wedge\dot D\right\rangle - \left\langle d_AA'\wedge H\right\rangle\right)+\int_{\partial K}\langle A'\wedge\wh H\rangle&=0,\quad
      \forall A'\in{} DV^1,
    \label{eqn:dd_ym_A_temporal}\\
    \sum_{K\in\T_h}\int_{\partial K}\langle A\wedge\wh H'\rangle&=0,\quad\forall\wh H'\in \V^{n-2},\label{eqn:dd_ym_continuity_temporal}
  \end{align}
\end{subequations}
with $D=-\epsilon\dot A$ and $H=\mu^{-1}F_A$.

We now show that equations \eqref{eq:dd_ym_temporal} imply equations \eqref{eq:dd_ym} for an appropriate choice of $\wh D$.

\begin{proposition}\label{prop:temporalimpliesgeneral}
  Let $(A,\wh H)$ be a solution to \eqref{eq:dd_ym_temporal}. Given an initial value for $\wh D$, evolve $\wh D$ by the equation $\dot{\wh D}=d_A\wh H$. Then, assuming \eqref{eqn:dd_ym_phi} holds at the initial time, it holds for all time, so $(A,0,\wh H,\wh D)$ is a solution to \eqref{eq:dd_ym}.
  \begin{proof}
    We first note that $d_A\wh H\in\V^{n-1}$, so it makes sense to set $\dot{\wh D}$ equal to this form. Indeed, $d\wh H$ is in $L^{4/3}\Lambda^{n-1}(\overline\Omega,\g)$ by assumption, and $[A\wedge\wh H]\in L^{4/3}\Lambda^{n-1}(\overline\Omega,\g)$ because it is the product of an $L^4$ form with an $L^2$ form. Next, $dd\wh H=0$ and $d[A\wedge\wh H]=[dA\wedge\wh H]-[A\wedge d\wh H]$, and one can check that our regularity assumptions on $A$ and $\wh H$ imply that both of these terms are in $L^1\Lambda^n(\overline\Omega,\g)$.

    Note that if $\phi'\in DV^0\rvert_K$ and $A\in DV^1\rvert_K$, then $d_A\phi'\in DV^1\rvert_K$. Thus, $d_A\phi'$ is a valid choice of test function $A'$ in \eqref{eqn:dd_ym_A_temporal}, from which we obtain that
    \begin{align*}
      \int_K\left(\left\langle d_A\phi'\wedge\dot D\right\rangle -\left\langle d_Ad_A\phi'\wedge H\right\rangle\right)+\int_{\partial K}\left\langle d_A\phi'\wedge\wh H\right\rangle=0,\\
      \int_K\left(\left\langle d_A\phi'\wedge\dot D\right\rangle-\left\langle [B,\phi']\wedge H\right\rangle\right)+\int_{\partial K}\left\langle d_A\phi'\wedge\wh H\right\rangle=0,\\
      \int_K\left(\left\langle d_A\phi'\wedge\dot D\right\rangle+\left\langle\phi',[B\wedge\mu^{-1}B]\right\rangle\right)+\int_{\partial K}\left\langle d_A\phi'\wedge\wh H\right\rangle=0,\\
      \int_K\left\langle d_A\phi'\wedge\dot D\right\rangle+\int_{\partial K}\left\langle d_A\phi'\wedge\wh H\right\rangle=0.
    \end{align*}
    for all $\phi'\in DV^0$ and $K\in\T_h$.

    Recall that, in temporal gauge, $\dot\rho=0$. Thus, taking the time derivative of the left-hand side of \eqref{eqn:dd_ym_phi}, we obtain
    \begin{equation*}
      \begin{split}
        &\phantom{={}}\int_K\left(\left\langle[\dot A,\phi']\wedge D\right\rangle+\left\langle d_A\phi'\wedge\dot D\right\rangle\right)-\int_{\partial K}\left\langle\phi',\dot{\wh D}\right\rangle\\
        &=\int_K\left(\left\langle\phi',[E\wedge \epsilon E]\right\rangle+\left\langle d_A\phi'\wedge \dot D\right\rangle\right)-\int_{\partial K}\left\langle\phi',d_A\wh H\right\rangle\\
        &=\int_K\left\langle d_A\phi'\wedge \dot D\right\rangle+\int_{\partial K}\left\langle d_A\phi'\wedge\wh H\right\rangle\\
        &=0.
        \end{split}
      \end{equation*}
      Thus, if \eqref{eqn:dd_ym_phi} holds at the initial time, it holds for all time. Meanwhile, \eqref{eqn:dd_ym_A} is just \eqref{eqn:dd_ym_A_temporal} with $\phi=0$, \eqref{eqn:dd_ym_continuity_A} is the same as \eqref{eqn:dd_ym_continuity_temporal}, and \eqref{eqn:dd_ym_continuity_phi} is trivial when $\phi=0$.
  \end{proof}
\end{proposition}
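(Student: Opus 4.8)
The plan is to check that, with $\phi=0$, the quadruple $(A,0,\wh H,\wh D)$ satisfies all four equations of \eqref{eq:dd_ym}. Three are immediate: since $[\phi,D]=0$, equation \eqref{eqn:dd_ym_A} is exactly \eqref{eqn:dd_ym_A_temporal}; \eqref{eqn:dd_ym_continuity_A} coincides with \eqref{eqn:dd_ym_continuity_temporal}; and \eqref{eqn:dd_ym_continuity_phi} holds trivially since its integrand vanishes. So the entire content is \eqref{eqn:dd_ym_phi}, which I would establish by showing that, for each $K$ and each fixed $\phi'\in DV^0$, the left-hand side of \eqref{eqn:dd_ym_phi} is constant in time; together with the hypothesis that it vanishes initially, this gives the claim. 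Before differentiating, I would verify that the prescribed evolution is well posed in the correct space, i.e.\ that $d_A\wh H=d\wh H+[A\wedge\wh H]\in\V^{n-1}$: the first summand lies in $L^{4/3}\Lambda^{n-1}$ by definition of $\V^{n-2}$ and the second is an $L^4\cdot L^2=L^{4/3}$ product, while $d(d_A\wh H)=[dA\wedge\wh H]-[A\wedge d\wh H]$ lands in $L^1\Lambda^n$ by H\"older.

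The algebraic heart of the argument is to feed $A'=d_A\phi'$ into \eqref{eqn:dd_ym_A_temporal}. This substitution is admissible in the domain-decomposed formulation precisely because $A'$ need only lie in $DV^1$ element-by-element, so the degree growth in $[A\wedge\phi']$ that obstructed the analogous step in the standard Galerkin semidiscretization is no longer fatal. Using $d_Ad_A\phi'=[B,\phi']$ and the invariance of $\langle\cdot\wedge\cdot\rangle$, the term $-\langle d_Ad_A\phi'\wedge H\rangle$ becomes $\langle\phi',[B\wedge\mu^{-1}B]\rangle$, which vanishes since $[B\wedge\mu^{-1}B]=0$. This yields the identity
\[
  \int_K\langle d_A\phi'\wedge\dot D\rangle+\int_{\partial K}\langle d_A\phi'\wedge\wh H\rangle=0
\]
for every $\phi'\in DV^0$ and every $K\in\T_h$, which I will use as the target in the next step.

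Next I would differentiate the left-hand side of \eqref{eqn:dd_ym_phi} in time. Since $\phi'$ is fixed, $\tfrac{d}{dt}(d_A\phi')=[\dot A\wedge\phi']$, and because $D=\epsilon E$ with $E=-\dot A$, the resulting contribution $\langle[\dot A,\phi']\wedge D\rangle$ collapses to $\langle\phi',[E\wedge\epsilon E]\rangle=0$ by invariance and the antisymmetry of $\epsilon$. The charge term contributes nothing because $\dot\rho=0$ in temporal gauge by \autoref{prop:rho_evolution}, and the boundary term contributes $-\int_{\partial K}\langle\phi',\dot{\wh D}\rangle=-\int_{\partial K}\langle\phi',d_A\wh H\rangle$ by construction. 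What remains equals $\int_K\langle d_A\phi'\wedge\dot D\rangle-\int_{\partial K}\langle\phi',d_A\wh H\rangle$, and the boundary integration-by-parts identity $\int_{\partial K}\langle\phi',d_A\wh H\rangle=-\int_{\partial K}\langle d_A\phi'\wedge\wh H\rangle$—which follows from the covariant Leibniz rule $d\langle\phi'\wedge\wh H\rangle=\langle d_A\phi'\wedge\wh H\rangle+\langle\phi'\wedge d_A\wh H\rangle$ together with Stokes on the closed manifold $\partial K$—identifies this with the left-hand side of the displayed identity above. Hence the time derivative is zero, and \eqref{eqn:dd_ym_phi} propagates from the initial time.

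I expect the main obstacle to be the careful boundary bookkeeping in this last step: differentiating the covariant pairings through $\dot A$, keeping the signs from the (anti)symmetry relations of \autoref{prop:identities} straight, and justifying the Stokes argument on $\partial K$ at the stated regularity. The genuinely new ingredient, by contrast, is conceptual rather than computational—recognizing that domain decomposition legalizes the test function $d_A\phi'$—after which per-element charge conservation emerges from matching two expressions that differ only by these boundary terms.
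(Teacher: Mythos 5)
Your proposal is correct and follows essentially the same route as the paper's proof: verify $d_A\wh H\in\V^{n-1}$, substitute $A'=d_A\phi'$ into \eqref{eqn:dd_ym_A_temporal} to kill the curvature term via $d_Ad_A\phi'=[B,\phi']$, and then show the time derivative of the left-hand side of \eqref{eqn:dd_ym_phi} vanishes using $\dot\rho=0$, $[E\wedge\epsilon E]=0$, and Stokes on the closed manifold $\partial K$. No gaps; the sign bookkeeping you flag as the main obstacle works out exactly as you describe.
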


\subsection{Hybrid semidiscretization}
We now discretize the Yang--Mills domain-decomposed variational problem in temporal gauge. Let $DV^0_h$, $DV^1_h$, and $\V^{n-2}_h$ be finite-dimensional subspaces of $DV^0$, $DV^1$, and $\V^{n-2}$, respectively, such that for all $K\in\T_h$, $A_h\in \left.DV^1_h\right\rvert_K$, and $\phi_h\in \left.DV^0_h\right\rvert_K$ we have
\begin{align*}
  d_{A_h}\phi_h&\in \left.DV^1_h\right\rvert_K.
\end{align*}
Recall that $d_{A_h}\phi_h=d\phi_h+[A_h,\phi_h]$. Using standard finite element spaces of differential forms, we can achieve $d\phi_h\in\left.DV^1_h\right\rvert_K$ without difficulty. However, unless $G$ is abelian and the Lie bracket is zero, we generally expect that if the coefficients of $A_h$ have polynomial degree $r$ and the coefficients of $\phi_h$ have polynomial degree $s$, then the coefficients of $[A_h,\phi_h]$ have polynomial degree $r+s$. Thus, in the nonabelian setting, we cannot expect $d_{A_h}\phi_h$ to be in the same space as $A_h$ unless $s=0$.

Consequently, we set $\left.DV^0_h\right\rvert_K$ to be the space of constant $\g$-valued $0$-forms on $K$. In other words, $DV^0_h$ is the space of piecewise constant functions ${\overline\Omega}\to\g$.

We then solve equations corresponding to \eqref{eq:dd_ym_temporal} for $A_h\in DV^1_h$ and $\wh H_h\in\V^{n-2}_h$.
\begin{subequations}
  \label{eq:dd_discrete_ym_temporal}
  \begin{align}
      \int_K\left(\left\langle A'_h\wedge\dot D_h\right\rangle - \left\langle d_{A_h}A'_h\wedge H_h\right\rangle\right)+\int_{\partial K}\langle A'_h\wedge\wh H_h\rangle&=0,\quad
      \forall A'_h\in{} DV^1_h,
    \label{eqn:dd_discrete_ym_A_temporal}\\
    \sum_{K\in\T_h}\int_{\partial K}\langle A_h\wedge\wh H'_h\rangle&=0,\quad\forall\wh H'_h\in \V^{n-2}_h,\label{eqn:dd_discrete_ym_continuity_temporal}
  \end{align}
\end{subequations}
where $D_h=-\epsilon\dot A_h$, $H_h=\mu^{-1}F_{A_h}$, and \eqref{eqn:dd_discrete_ym_A_temporal} holds for all $K\in\T_h$.

Given an initial value for $\wh D_h$, we define $\wh D_h$ for all time via the equation
\begin{equation*}
  \dot{\wh D}_h=d_{A_h}\wh H_h.
\end{equation*}
Note that if $DV^1_h$ and $\wh V^{n-2}_h$ are spaces of polynomials, then $\wh D_h$ will in general have higher polynomial degree than $\wh H_h$ because of the $[A_h\wedge\wh H_h]$ term.

We now prove the analogue of \autoref{prop:temporalimpliesgeneral}.

\begin{proposition}\label{prop:conservationlaw}
  Let $(A_h,\wh H_h)$ be a solution to \eqref{eq:dd_discrete_ym_temporal}. Given an initial value for $\wh D_h$, evolve $\wh D$ by $\dot{\wh D}_h=d_{A_h}\wh H_h$. Then, assuming
  \begin{equation}\label{eq:conservationlaw}
      \int_K\left(\left\langle d_{A_h}\phi'_h\wedge D_h\right\rangle+\langle\phi'_h,\rho\rangle\right)-\int_{\partial K}\left\langle\phi'_h,\wh D_h\right\rangle=0,\quad\forall\phi'_h\in{}DV^0_h.
  \end{equation}
  holds at the initial time, it holds for all time.
  \begin{proof}
    Let $\phi'_h\in DV^0_h$. By assumption, $d_{A_h}\phi'_h\in DV^1_h$. Thus, we can plug in $A'_h=d_{A_h}\phi'_h$ into equation \eqref{eqn:dd_discrete_ym_A_temporal}. We obtain, for all $\phi'_h\in DV^0_h$,
    \begin{equation}\label{eq:ym_pluginfora}
      \int_K\left(\left\langle d_{A_h}\phi'_h\wedge\dot D_h\right\rangle - \left\langle d_{A_h}d_{A_h}\phi'_h\wedge H_h\right\rangle\right)+\int_{\partial K}\left\langle d_{A_h}\phi'_h\wedge\wh H_h\right\rangle=0.
    \end{equation}
    The first term of \eqref{eq:ym_pluginfora} is equal to $\dd t\left\langle d_{A_h}\phi'_h\wedge D_h\right\rangle$. Indeed,
    \begin{equation*}
      \dd t\left\langle d_{A_h}\phi'_h\wedge D_h\right\rangle=\left\langle d_{A_h}\phi'_h\wedge\dot D_h\right\rangle+\left\langle[\dot A_h,\phi_h]\wedge D_h\right\rangle,
    \end{equation*}
    and
    \begin{equation*}
      \left\langle[\dot A_h,\phi'_h]\wedge D_h\right\rangle=-\left\langle\phi'_h,[\dot A_h\wedge D_h]\right\rangle=\left\langle\phi'_h,[\dot A_h\wedge\epsilon\dot A_h]\right\rangle=0,
    \end{equation*}
    by the symmetry of $\epsilon$ and the antisymmetry of the Lie bracket.
    
    The second term of \eqref{eq:ym_pluginfora} is zero. Indeed,
    \begin{equation*}
      \left\langle d_{A_h}d_{A_h}\phi'_h\wedge H_h\right\rangle=\left\langle[F_{A_h},\phi'_h]\wedge\mu^{-1}F_{A_h}\right\rangle=-\left\langle\phi'_h,[F_{A_h}\wedge\mu^{-1}F_{A_h}]\right\rangle=0.
    \end{equation*}
    Meanwhile, by integration by parts and using $\partial\partial K=0$, the third term of \eqref{eq:ym_pluginfora} is
    \begin{equation*}
      \int_{\partial K}\left\langle d_{A_h}\phi'_h\wedge\wh H_h\right\rangle=-\int_{\partial K}\left\langle\phi'_h,d_{A_h}\wh H_h\right\rangle=-\int_{\partial K}\left\langle\phi'_h,\dot{\wh D}_h\right\rangle.
    \end{equation*}
    Combining this information with the fact that $\dot\rho=0$, we have that
    \begin{equation}\label{eq:ym_cons_law}
      \dd t\left(\int_K\left(\left\langle d_{A_h}\phi'_h\wedge D_h\right\rangle+\langle\phi'_h,\rho\rangle\right)-\int_{\partial K}\left\langle\phi'_h,\wh D_h\right\rangle\right)=0,
    \end{equation}
    for all $K$ and for all $\phi'_h\in DV^0_h$, as desired.
  \end{proof}
\end{proposition}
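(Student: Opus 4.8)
The plan is to mimic the proof of \autoref{prop:temporalimpliesgeneral}, but working element-by-element since everything in \eqref{eq:conservationlaw} is local to a single $K$. The key idea is that the expression in \eqref{eq:conservationlaw} is constant in time because its time derivative vanishes, and the main tool is the substitution $A'_h = d_{A_h}\phi'_h$ into the discrete equation of motion \eqref{eqn:dd_discrete_ym_A_temporal}, which is legitimate precisely because of the hypothesis $d_{A_h}\phi'_h \in DV^1_h|_K$ that was built into the finite element spaces.

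First I would differentiate the left-hand side of \eqref{eq:conservationlaw} in time. Because $\phi'_h$ is time-independent (it is a fixed test function) and $\rho$ is constant in temporal gauge (i.e.\ $\dot\rho=0$, as established in \autoref{prop:rho_evolution} with $\phi=0$), the derivative of the middle term $\langle\phi'_h,\rho\rangle$ drops out. What remains is the derivative of $\int_K\langle d_{A_h}\phi'_h\wedge D_h\rangle$ and of $-\int_{\partial K}\langle\phi'_h,\wh D_h\rangle$. For the volume term, I would expand $\tfrac{d}{dt}\langle d_{A_h}\phi'_h\wedge D_h\rangle = \langle d_{A_h}\phi'_h\wedge\dot D_h\rangle + \langle[\dot A_h,\phi'_h]\wedge D_h\rangle$, and then show the second summand vanishes: using antisymmetry of the bracket and $D_h=-\epsilon\dot A_h$, it becomes $\langle\phi'_h,[\dot A_h\wedge\epsilon\dot A_h]\rangle$, which is zero by the symmetry of $\epsilon$ and antisymmetry of $[\cdot\wedge\cdot]$ (as recorded in the proposition on (anti)symmetry of $\epsilon$ and $\mu^{-1}$). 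For the boundary term, I would use $\dot{\wh D}_h=d_{A_h}\wh H_h$.

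The crux is then to show that what survives cancels. I would plug $A'_h=d_{A_h}\phi'_h$ into \eqref{eqn:dd_discrete_ym_A_temporal}, yielding three terms as in \eqref{eq:ym_pluginfora}. The term $\langle d_{A_h}d_{A_h}\phi'_h\wedge H_h\rangle$ vanishes because $d_{A_h}d_{A_h}\phi'_h=[F_{A_h}\wedge\phi'_h]$ and $\langle[F_{A_h},\phi'_h]\wedge\mu^{-1}F_{A_h}\rangle=-\langle\phi'_h,[F_{A_h}\wedge\mu^{-1}F_{A_h}]\rangle=0$, again by $[u\wedge\mu^{-1}u]=0$ together with the invariance property of $\langle\cdot,\cdot\rangle$ from \eqref{eqn:angleAssoc}. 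For the boundary integral $\int_{\partial K}\langle d_{A_h}\phi'_h\wedge\wh H_h\rangle$, I would integrate by parts over $\partial K$ (legitimate since $\partial\partial K=\emptyset$) to rewrite it as $-\int_{\partial K}\langle\phi'_h,d_{A_h}\wh H_h\rangle=-\int_{\partial K}\langle\phi'_h,\dot{\wh D}_h\rangle$. Matching these surviving pieces against the time-derivative computation from the previous step shows the total derivative is zero.

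The main obstacle I anticipate is purely bookkeeping with signs and with the various bracket/bilinear-form identities: one must correctly apply \eqref{eqn:angleAssoc} and the $\epsilon$/$\mu^{-1}$ symmetry to kill the two ``unwanted'' terms, and correctly match the boundary integration-by-parts on $\partial K$ against the evolution law for $\wh D_h$. Unlike the continuous case \autoref{prop:temporalimpliesgeneral}, there is no summation over elements to worry about here---every identity is local to a single $K$---so the only subtlety is ensuring each integration by parts is justified at the given regularity and that $d_{A_h}\phi'_h$ is genuinely an admissible test function, which is exactly the structural hypothesis we imposed on the discrete spaces. With those identities in hand, the conclusion that \eqref{eq:conservationlaw} is time-independent, and hence holds for all time if it holds initially, follows immediately.
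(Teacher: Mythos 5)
Your proposal is correct and follows essentially the same route as the paper's own proof: the substitution $A'_h=d_{A_h}\phi'_h$ into \eqref{eqn:dd_discrete_ym_A_temporal}, the vanishing of $\langle[\dot A_h,\phi'_h]\wedge D_h\rangle$ and $\langle d_{A_h}d_{A_h}\phi'_h\wedge H_h\rangle$ via the symmetry of $\epsilon$, $\mu^{-1}$ and the bracket identities, the boundary integration by parts using $\partial\partial K=\emptyset$ together with $\dot{\wh D}_h=d_{A_h}\wh H_h$, and $\dot\rho=0$. The only difference is expository (you begin by differentiating \eqref{eq:conservationlaw} and then match against the substituted equation, whereas the paper proceeds in the reverse order), which does not change the argument.
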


\subsection{Local charge conservation}\label{subsec:chargeconservation}
We can interpret \autoref{prop:conservationlaw} as giving us an approximate charge $\widehat\rho_h$ that satisfies a local conservation law. Namely, for any $\phi'\in DV^0_h$, we have that $\phi'$ is constant on $K$, so $d\phi'=0$ on $K$, and so \eqref{eq:conservationlaw} simplifies to
\begin{align*}
  \int_K\left(\left\langle[A_h,\phi'_h]\wedge D_h\right\rangle+\langle\phi'_h,\rho\rangle-\langle d\phi'_h\wedge\wh D_h\rangle-\langle\phi'_h,d\wh D_h\rangle\right)&=0,\\
  \int_K\left(-\langle\phi'_h,[A_h\wedge D_h]\rangle+\langle\phi'_h,\rho\rangle-\langle\phi'_h,d\wh D_h\rangle\right)&=0,\\
  \int_K\left\langle\phi'_h,d\wh D_h+[A_h\wedge D_h]\right\rangle&=\int_K\langle\phi'_h,\rho\rangle.
\end{align*}
We know that $\dot\rho=0$. Thus, if we set
\begin{equation*}
  \widehat\rho_h:=d\wh D_h+[A_h\wedge D_h],
\end{equation*}
we have that $\widehat\rho_h$ is an approximation to the charge $\rho=d_AD=dD+[A\wedge D]$ and that
\begin{equation*}
  \dd t\int_K\langle\phi'_h,\widehat\rho_h\rangle=0,\quad\forall\phi'_h\in DV^0_h.
\end{equation*}
for all $K\in\T_h$. Note that $\widehat\rho_h$ depends on both $\wh D_h$ and $D_h$.

Since $DV^0_h$ is the space of piecewise constant $\g$-valued functions, we can state the above equation more simply as
\begin{equation*}
  \dd t\int_K\widehat\rho_h=0,\quad\forall K\in\T_h.
\end{equation*}
This equation is our local conservation law: The total charge in each element is conserved.

\section{Numerical implementation}\label{sec:numerical}
We implemented our domain decomposed hybrid method for the Yang--Mills equations in FEniCS \citep{LoMaWe2012,AlBlHaJoKeLoRiRiRoWe2015} and verified that $\widehat\rho_h$ is conserved in the sense above. As illustrated in \autoref{fig:chargeconservation}, when we simulated the Yang--Mills equations, the total charge in each element as measured by $\widehat\rho_h:=d\wh D_h+[A_h\wedge D_h]$ remained zero. In contrast, the total charge in each element as measured by $\rho_h:=d_{A_h}D_h=dD_h+[A_h\wedge D_h]$ drifted away from zero, showing the advantage of this hybrid scheme. We implemented our method on a square, a flat torus (a square with periodic boundary conditions), and the surface of a sphere. We simulated the Yang--Mills equations in vacuum, that is, with $\epsilon$ and $\mu^{-1}$ being just the Hodge star operator on the domain.

\begin{figure}
  \centering
  \includegraphics{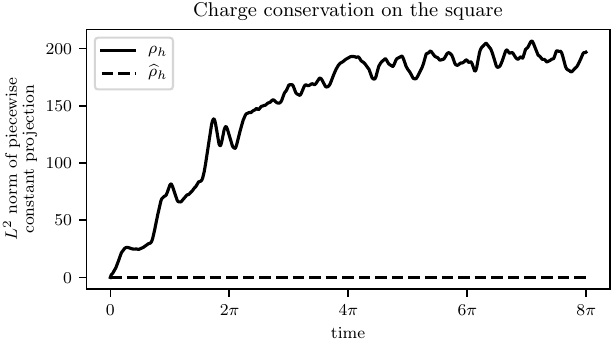}
  \vskip0.5\baselineskip
  \includegraphics{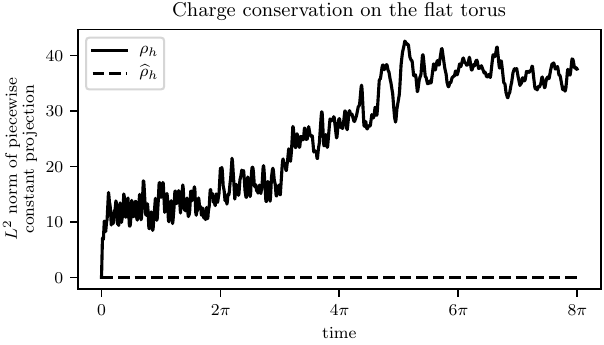}
  \vskip0.5\baselineskip
  \includegraphics{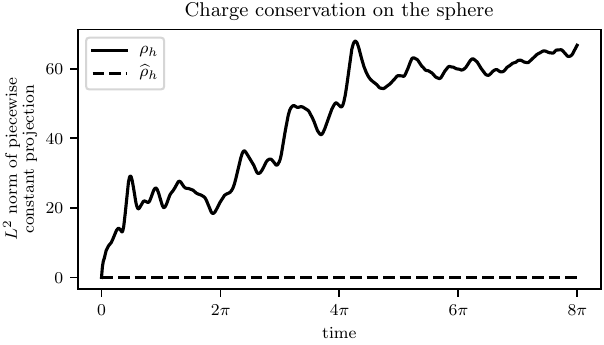}
  \caption{Numerical evolution of charge for the Yang--Mills equations
    on various domains, comparing the standard expression
    $\rho_h=d_{A_h}D_h=dD_h+[A_h\wedge D_h]$ with our new expression
    $\widehat\rho_h=d\wh D_h+[A_h\wedge D_h]$ incorporating the
    hybrid variable $ \widehat{ {D} } _h $. Projecting to
    piecewise constant $\g$-valued functions shows that the total
    charge in each element remains zero using $\widehat\rho_h$,
    whereas it drifts away from zero using $\rho_h$.}
  \label{fig:chargeconservation}
\end{figure}

We obtained solutions of the domain-decomposed problem \eqref{eq:dd_discrete_ym_temporal} in the simpler setting where our space of Lagrange multipliers $\V^{n-2}_h$ has degree large enough so that \eqref{eqn:dd_discrete_ym_continuity_temporal} forces $A_h$ to be in the conforming space $\mr V^1_h$. In this setting, we can use the evolution equation \eqref{eqn:ym_galerkin_el} from the conforming setting to evolve $A_h\in\mr V^1_h$, and then use \eqref{eqn:dd_discrete_ym_A_temporal} to solve for $\wh H_h$ as a post-processing step. We note, however, that equations \eqref{eq:dd_discrete_ym_temporal} could also be used in a more general setting where the space of Lagrange multipliers $\V^{n-2}_h$ is smaller, in which case we would obtain solutions $A_h\in DV^1_h$ that are not conforming.

We approximated $1$-forms using the $ \mathcal{P} _r \Lambda ^1 $ family of finite element differential forms \citep{ArFaWi2006,ArFaWi2010}, whose two-dimensional vector field proxies correspond to curl-conforming Brezzi--Douglas--Marini edge elements \citep{BrDoMa1985}. Tuples of these $1$-forms gives us our space $\mr V^1_h$ of $\g$-valued $1$-forms. Meanwhile, in this two-dimensional setting, $\wh H$ is a $\g$-valued zero-form, so we can represent it with a tuple of continuous Galerkin elements, giving us our space $\wh V^{n-2}_h$.

Using these curl-conforming elements, we evolved $A_h$ and $D_h$ using a leapfrog scheme, while computing the hybrid variables $\wh H_h$ and $\wh D_h$ in a post-processing step. The full numerical scheme is as follows.

\begin{enumerate}
\item Let $A_{n+\frac12}=A_n-\frac12\Delta t\epsilon^{-1}D_n$.
\item Let $\dot D_{n+\frac12}\in\mr V^1_h$ be the solution to \eqref{eqn:ym_galerkin_el}, that is,
  \begin{equation*}
    \int_{\overline\Omega}\left(\left\langle A'_h\wedge\dot D_{n+\frac12}\right\rangle - \left\langle d_{A_{n+\frac12}}A'_h\wedge H_{n+\frac12}\right\rangle\right)=0,\qquad\forall A'_h\in\mr V^1_h,
  \end{equation*}
  where $H_{n+\frac12}:=\mu^{-1}F_{A_{n+\frac12}}$.
\item \label{item:H} Let $\wh H_{n+\frac12}\in\V_h^{n-2}$ be the solution to \eqref{eqn:dd_discrete_ym_A_temporal}, that is,
  \begin{multline*}
    \int_K\left(\left\langle A'_h\wedge\dot D_{n+\frac12}\right\rangle - \left\langle d_{A_{n+\frac12}}A'_h\wedge H_{n+\frac12}\right\rangle\right)+\int_{\partial K}\langle A'_h\wedge\wh H_{n+\frac12}\rangle=0,\\
    \forall A'_h\in{} DV^1_h,\ \forall K\in\T_h,
  \end{multline*}
  that minimizes $\norm{\wh H_{n+\frac12} -H_{n+\frac12}}_{L^2(\Omega)}^2+\norm{\dot{\wh D}_{n+\frac12}-\dot D_{n+\frac12}}_{L^2(\Omega)}^2$, where $\dot{\wh D}_{n+\frac12}:=d_{A_{n+\frac12}}\wh H_{n+\frac12}$.
\item Let $D_{n+1}=D_n+\Delta t\dot D_{n+1/2}$.
\item Let $\wh D_{n+1}=\wh D_n+\Delta t\dot{\wh D}_{n+1/2}$.
\item Let $A_{n+1}=A_{n+\frac12}-\frac12\Delta t\epsilon^{-1}D_{n+1}$.
\item Let $\rho_{n+1}=d_{A_{n+1}}D_{n+1}$.
\item Let $\widehat\rho_{n+1}=d\wh D_{n+1}+[A_{n+1}\wedge D_{n+1}]$.
\end{enumerate}

The minimization in step \eqref{item:H} is needed because \eqref{eqn:dd_discrete_ym_A_temporal} does not determine $\wh H_h$ uniquely. In particular, \eqref{eqn:dd_discrete_ym_A_temporal} only involves the values of $\wh H_h$ on the element boundaries, so it gives no information about its interior degrees of freedom. Meanwhile, \eqref{eqn:dd_discrete_ym_continuity_temporal} is automatically satisfied because $A_h$ is curl-conforming.

In these examples, we worked with the three-dimensional Lie algebra $\g=\mathfrak{su}(2)$, which is isomorphic to $\mathbb R^3$ with the cross product structure, so our connection $A$ can be represented by a triple of ordinary $1$-forms, one for each component of $\g$. Let $\xi_0$, $\xi_1$, and $\xi_2$ denote a basis of $\mathfrak{su}(2)$ such that $[\xi_0,\xi_1]=\xi_2$, $[\xi_1,\xi_2]=\xi_0$, and $[\xi_2,\xi_0]=\xi_1$. For the simulations illustrated in \autoref{fig:chargeconservation}, the initial conditions we chose for $A$ are
\begin{equation*}
  \left(y(\pi-y)\,dx+x(\pi-x)\,dy\right)\otimes\xi_0+\left(y^2(\pi-y)\,dx+x^2(\pi-x)\,dy\right)\otimes\xi_1+0\otimes\xi_2
\end{equation*}
for the square,
\begin{equation*}
  \left(\sin(4x+2y)\,dx+dy\right)\otimes\xi_0+\left(dx+\sin(2x+6y\right)\,dy\otimes\xi_1+0\otimes\xi_2
\end{equation*}
for the flat torus (square with periodic boundary conditions), and the restriction of
\begin{equation*}
  \left(y(\pi-y)\,dx+x(\pi-x)\,dy+z\,dz\right)\otimes\xi_0+\left(y^2(\pi-y)\,dx+x^2(\pi-x)\,dy\right)\otimes\xi_1+0\otimes\xi_2
\end{equation*}
to the sphere for the sphere. We set $D=0$ at the initial time for all three. We chose these initial functions arbitrarily, except to ensure that they satisfy the appropriate boundary conditions and give generic-seeming solutions. In particular, the $\xi_2$ component that initially starts at zero does not remain zero, as expected since $\xi_2=[\xi_0,\xi_1]$.

\begin{table}
  \begin{subtable}{\linewidth}
    \centering
\begin{tabular*}{0.75\textwidth}{@{\extracolsep{\fill}}*{8}{r}}
\toprule
$r$ & $N$ &
\multicolumn{2}{c}{$A$} &
\multicolumn{2}{c}{$dA$} &
\multicolumn{2}{c}{$H$} \\
\midrule 1  & 4 & 10.074 & --- & 16.241 & --- & 17.637 & --- \\
 & 8 & 6.443 & 0.6 & 13.357 & 0.3 & 13.119 & 0.4 \\
 & 16 & 3.480 & 0.9 & 9.951 & 0.4 & 9.682 & 0.4 \\
 & 32 & 1.728 & 1.0 & 5.129 & 1.0 & 4.926 & 1.0 \\
 & 64 & 0.832 & 1.1 & 2.193 & 1.2 & 2.041 & 1.3 \\
 & 128 & 0.369 & 1.2 & 0.849 & 1.4 & 0.835 & 1.3 \\
 & 256 & --- & --- & --- & --- & --- & --- \\
\midrule 2  & 4 & 5.970 & --- & 13.762 & --- & 12.077 & --- \\
 & 8 & 2.004 & 1.6 & 7.245 & 0.9 & 6.656 & 0.9 \\
 & 16 & 0.554 & 1.9 & 2.331 & 1.6 & 2.203 & 1.6 \\
 & 32 & 0.144 & 1.9 & 0.603 & 2.0 & 0.568 & 2.0 \\
 & 64 & 0.036 & 2.0 & 0.223 & 1.4 & 0.216 & 1.4 \\
 & 128 & 0.009 & 2.0 & 0.081 & 1.5 & 0.078 & 1.5 \\
 & 256 & --- & --- & --- & --- & --- & --- \\
\bottomrule
\end{tabular*}
\caption{$ N \times N $ square mesh}
\end{subtable}

\bigskip

\begin{subtable}{\linewidth}
  \centering
\begin{tabular*}{0.75\textwidth}{@{\extracolsep{\fill}}*{8}{r}}
\toprule
$r$ & $N$ &
\multicolumn{2}{c}{$A$} &
\multicolumn{2}{c}{$dA$} &
\multicolumn{2}{c}{$H$} \\
\midrule 1  & 4 & 5.199 & --- & 6.887 & --- & 6.050 & --- \\
 & 8 & 2.547 & 1.0 & 6.963 & -0.0 & 7.073 & -0.2 \\
 & 16 & 1.316 & 1.0 & 4.530 & 0.6 & 4.705 & 0.6 \\
 & 32 & 0.612 & 1.1 & 2.326 & 1.0 & 2.267 & 1.1 \\
 & 64 & 0.282 & 1.1 & 0.748 & 1.6 & 0.721 & 1.7 \\
 & 128 & 0.123 & 1.2 & 0.249 & 1.6 & 0.219 & 1.7 \\
 & 256 & --- & --- & --- & --- & --- & --- \\
\midrule 2  & 4 & 2.769 & --- & 6.572 & --- & 6.577 & --- \\
 & 8 & 1.051 & 1.4 & 3.113 & 1.1 & 3.131 & 1.1 \\
 & 16 & 0.298 & 1.8 & 0.815 & 1.9 & 0.802 & 2.0 \\
 & 32 & 0.074 & 2.0 & 0.118 & 2.8 & 0.114 & 2.8 \\
 & 64 & 0.019 & 2.0 & 0.025 & 2.2 & 0.023 & 2.3 \\
 & 128 & 0.005 & 2.0 & 0.008 & 1.7 & 0.007 & 1.7 \\
 & 256 & --- & --- & --- & --- & --- & --- \\
\bottomrule
\end{tabular*}
\caption{$ N \times N $ torus mesh}
\end{subtable}  
  \caption{$L^2$ errors and rates for the numerical solution at time $\pi$, when compared to the solution on a $256 \times 256$ mesh. The results suggest linear convergence in $A$ for degree $r=1$ elements and quadratic convergence for $r=2$.}
  \label{tab:convergence}
\end{table}

\autoref{tab:convergence} shows the empirical errors and rates of convergence at $ t = \pi $ for the square and torus. By contrast with N\'ed\'elec's method for Maxwell's equations, we do not observe faster convergence of $A _h$ in the $ L ^2 $ norm than in the energy norm. In particular, $ A _h $ appears to converge with rate $r$ rather than $ r + 1 $ for degree-$r$ elements; compare the $ L ^2 $ error estimates for Maxwell's equations in Section 4 of \citet{Monk1992}. At $ t = 0 $, standard approximation theory implies that the degree-$r$ interpolant of the initial conditions has error $ \mathcal{O} ( h ^{ r + 1 } ) $, but this is seen to worsen to $ \mathcal{O} ( h ^r ) $ for longer times $t$. We suspect that the reduced rate of $ L ^2 $ convergence is due to the quadratic nonlinear term coupling the error in $A$ with the (one degree lower) error in its derivatives. Further analysis is needed but is beyond the scope of the present paper.

\begin{figure}
  \centering
  \includegraphics[width=\textwidth]{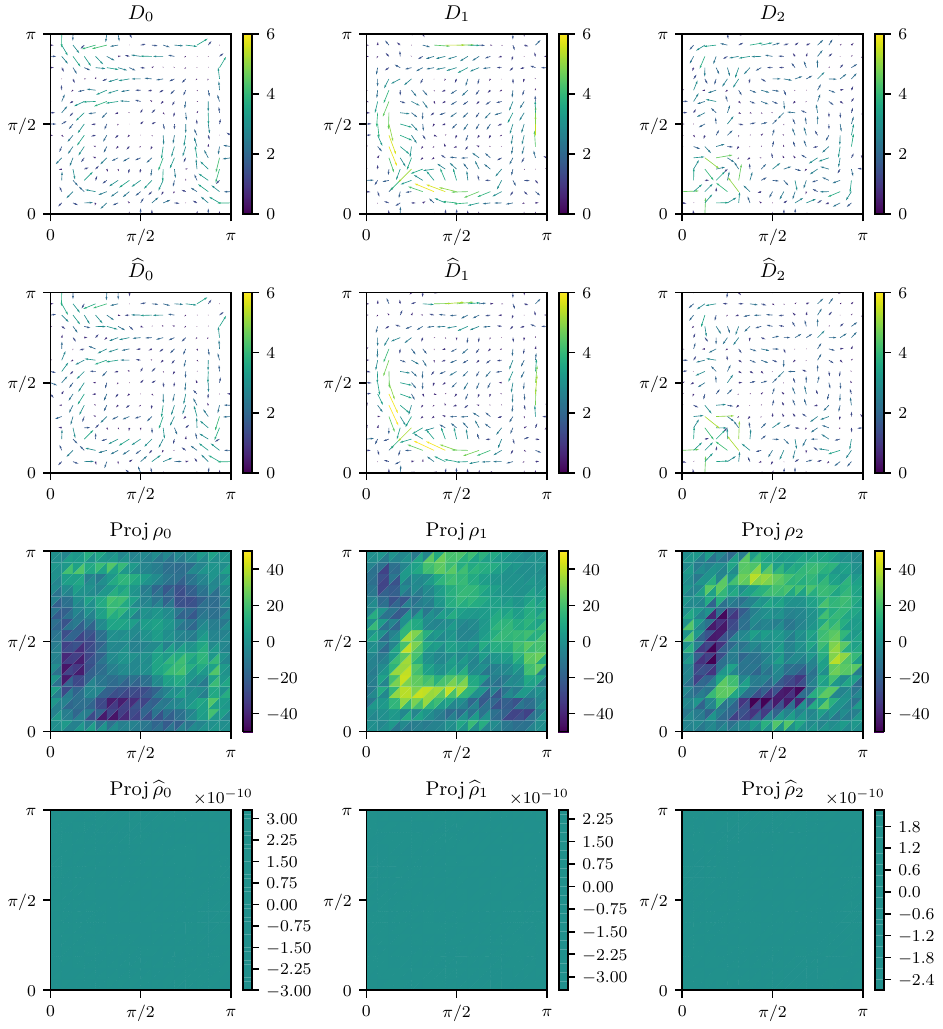}
  \caption{Comparison of the three components of $ {D} _h $ and $ \widehat{ {D} } _h $, along with the corresponding charge densities $ \rho _h $ and $ \widehat{ \rho } _h $ projected onto piecewise constants, for the $ 16 \times 16 $ square mesh at $ t = \pi $. While $ {D} _h $ and $ \widehat{ {D} } _h $ are nearly indistinguishable, $ \rho _h $ appears to show spurious nonzero charges, while $ \widehat{ \rho } _h $ remains zero due to the conservativity of the hybrid scheme.}
  \label{fig:Drho}
\end{figure}

Recall that the evolution of $\wh\rho_h$ conserves the total charge in each element $K$. To illustrate this conservation law, we projected both $\rho_h$ and $\wh\rho_h$ to the space of piecewise constant $\g$-valued functions, giving us the average charge on each element. The $L^2$ norms of these projections are plotted in \autoref{fig:chargeconservation}, showing that $\wh\rho_h$ conserved the total charge in each element, but $\rho_h$ did not. We also illustrate this behavior in \autoref{fig:Drho}, where one can see that $D _h $ and $\widehat D_h$ look identical, but there is a stark difference when we look at the corresponding charges projected to the piecewise constant functions.

\section{Remarks on the case of nonzero current}\label{sec:current}
So far, we have discussed the Yang--Mills equations with zero current, in contrast with our paper on Maxwell's equations \cite{BeSt2020}, where we do not impose this condition. For Maxwell's equations, the charge and current satisfy the continuity equation $\dot\rho=-\Div J$. We can think of $\rho$ and $J$ as given data satisfying this constraint, or, equivalently, we can think of the given data as being the initial charge distribution $\rho$ at time zero, along with the current $J$ for all time, and then we can use the equation $\dot\rho=-\Div J$ to determine the charge distribution at all future times, independently from our evolution of the potentials $\phi$ and $A$ and the corresponding fields $E$, $B$, $D$, and $H$.

In stark contrast, the corresponding relationship between $\rho$ and $J$ in the Yang--Mills setting is
\begin{equation*}
  \dot\rho-[\phi,\rho]=-d_AJ=-dJ-[A\wedge J].
\end{equation*}
As such, the evolution of the charge $\rho$ depends not only on the current $J$ but also on the potentials $\phi$ and $A$. Unlike in Maxwell's equations, we cannot determine $\rho$ \emph{a priori}; different initial conditions for $\phi$ and $A$ will lead to different future charge distributions. Of course, we have an exception to this if the current $J$ is zero, in which case, in temporal gauge, this equation reduces to $\dot\rho=0$, which does not depend on $A$.

We now discuss what happens in the general setting of nonzero current. Let $J$ be a time-varying $\g$-valued $(n-1)$-form, specifically a $C^0$ curve in $\V^{n-1}$. The Yang--Mills Lagrangian is
\begin{equation*}\label{eq:ym_lagrangian_J}
  L(A,\phi,\dot A,\dot\phi):=\int_{\overline\Omega}\left(\frac12\langle E\wedge D\rangle-\frac12\langle B\wedge H\rangle - \langle\phi,\rho\rangle+\langle A\wedge J\rangle\right).
\end{equation*}
The Euler--Lagrange equations are
  \begin{alignat*}{2}
    \int_{\overline\Omega}\left(\left\langle A'\wedge(\dot D-[\phi,D])\right\rangle - \left\langle d_AA'\wedge H\right\rangle+\langle A'\wedge J\rangle\right)&=0,\quad&\forall A'\in\mr V^1,\\
    \int_{\overline\Omega}\left(\left\langle d_A\phi'\wedge D\right\rangle+\left\langle\phi',\rho\right\rangle\right)&=0,\quad&\forall\phi'\in\mr V^0,
  \end{alignat*}
which are weak expressions of
\begin{align*}
  \dot D-[\phi, D]&=d_AH-J,\\
  d_AD&=\rho.
\end{align*}

As before, we work in temporal gauge $\phi=0$. With the standard semidiscretization, we obtain
\begin{equation*}\label{eqn:ym_galerkin_el_J}
  \int_{\overline\Omega}\left(\left\langle A'_h\wedge\dot D_h\right\rangle - \left\langle d_{A_h}A'_h\wedge H_h\right\rangle+\langle A'_h\wedge J\rangle\right)=0,\qquad\forall A'_h\in\mr V^1_h.
\end{equation*}
Meanwhile, with the domain-decomposed hybrid semidiscretization, we obtain
\begin{subequations}
  \label{eq:dd_discrete_ym_temporal_J}
  \begin{align}
      \int_K\left(\left\langle A'_h\wedge\dot D_h\right\rangle - \left\langle d_{A_h}A'_h\wedge H_h\right\rangle+\langle A_h'\wedge J\rangle\right)+\int_{\partial K}\langle A'_h\wedge\wh H_h\rangle&=0,\quad
      \forall A'_h\in{} DV^1_h,
    \label{eqn:dd_discrete_ym_A_temporal_J}\\
    \sum_{K\in\T_h}\int_{\partial K}\langle A_h\wedge\wh H'_h\rangle&=0,\quad\forall\wh H'_h\in \V^{n-2}_h,\label{eqn:dd_discrete_ym_continuity_temporal_J}
  \end{align}
\end{subequations}
and we evolve $\wh D_h$ by the equation
\begin{equation}\label{eq:whD_J}
  \dot{\wh D}_h:=d_{A_h}\wh H_h-J.
\end{equation}

So far, apart from the extra term, not much has changed from our earlier work. However, to prove the analogue of \autoref{prop:conservationlaw}, we must do something new. Previously, we had $\dot\rho=0$. Now, we have $\dot\rho=-d_AJ$, but, as discussed earlier, given $J$, we cannot determine the evolution of $\rho$ without knowing how the current interacts with $A$ via the $[A\wedge J]$ term of $d_AJ$. We only have $A_h$, not $A$, so we instead define a new quantity $\wt\rho_h$ to match $\rho$ at the initial time and evolve via
\begin{equation}\label{eq:wtrho}
  \dot{\wt\rho}_h:=-d_{A_h}J.
\end{equation}
Our results will then show that, averaged over each element, $\wh\rho_h:=d\wh D_h+[A_h\wedge D_h]$ agrees with $\wt\rho_h$. If we have reason to believe that $[A\wedge J]=[A_h\wedge J]$, then $\wt\rho_h=\rho$, and we recover our earlier results of $\wh\rho_h$ agreeing with $\rho$, but, unfortunately, we do not expect this to generally be the case. There are two special cases where $[A\wedge J]=[A_h\wedge J]$ holds. The first is when $J$ is zero, which we have addressed in the bulk of this paper. The second is when $\g$ is Abelian, in which case $A$ is simply a tuple of vector potentials independently evolving by Maxwell's equations, so we can use our stronger results in \cite{BeSt2020}.

Nonetheless, we proceed to prove the analogue of \autoref{prop:conservationlaw}.
\begin{proposition}\label{prop:conservationlaw_J}
  Let $(A_h,\wh H_h)$ be a solution to \eqref{eq:dd_discrete_ym_temporal_J}. Let $\wt\rho_h$ be defined by \eqref{eq:wtrho}, and, given an initial value for $\wh D_h$, evolve $\wh D$ by \eqref{eq:whD_J}. Then, assuming
  \begin{equation}\label{eq:conservationlaw_J}
      \int_K\left(\left\langle d_{A_h}\phi'_h\wedge D_h\right\rangle+\langle\phi'_h,\wt\rho_h\rangle\right)-\int_{\partial K}\left\langle\phi'_h,\wh D_h\right\rangle=0,\quad\forall\phi'_h\in{}DV^0_h.
  \end{equation}
  holds at the initial time, it holds for all time.
  \begin{proof}
    As in the proof of \autoref{prop:conservationlaw}, we plug in $d_{A_h}\phi'_h$ for $A'_h$ into \eqref{eqn:dd_discrete_ym_A_temporal_J}. We obtain, for all $\phi'_h\in DV^0_h$,
    \begin{equation}\label{eq:ym_pluginfora_J}
      \int_K\left(\left\langle d_{A_h}\phi'_h\wedge\dot D_h\right\rangle - \left\langle d_{A_h}d_{A_h}\phi'_h\wedge H_h\right\rangle+\left\langle d_{A_h}\phi'_h\wedge J\right\rangle\right)+\int_{\partial K}\left\langle d_{A_h}\phi'_h\wedge\wh H_h\right\rangle=0.
    \end{equation}
    Using the computations in the proof of \autoref{prop:conservationlaw}, we can reduce this equation to
    \begin{equation*}
      \int_K\left(\dd t\left\langle d_{A_h}\phi'_h\wedge D_h\right\rangle-0+\left\langle d_{A_h}\phi'_h\wedge J\right\rangle\right)-\int_{\partial K}\left\langle\phi'_h,d_{A_h}\wh H_h\right\rangle=0.
    \end{equation*}
    Dealing with the new current term, we integrate by parts to obtain
    \begin{equation*}
      \int_K\left\langle d_{A_h}\phi'_h\wedge J\right\rangle=-\int_K\left\langle\phi'_h,d_{A_h}J\right\rangle+\int_{\partial K}\left\langle\phi'_h,J\right\rangle.
    \end{equation*}
    We thus obtain
    \begin{equation*}
      \int_K\left(\dd t\left\langle d_{A_h}\phi'_h\wedge D_h\right\rangle-\left\langle\phi'_h, d_{A_h}J\right\rangle\right)-\int_{\partial K}\left\langle\phi'_h,d_{A_h}\wh H_h-J\right\rangle=0.
    \end{equation*}
    Substituting using equations \eqref{eq:wtrho} and \eqref{eq:whD_J}, we obtain
    \begin{equation*}
      \int_K\left(\dd t\left\langle d_{A_h}\phi'_h\wedge D_h\right\rangle+\left\langle\phi'_h, \dot{\wt\rho}_h\right\rangle\right)-\int_{\partial K}\left\langle\phi'_h,\dot{\wh D}_h\right\rangle=0,
    \end{equation*}
    which is the time derivative of \eqref{eq:conservationlaw_J}.
  \end{proof}
\end{proposition}

Then, as in \autoref{subsec:chargeconservation}, we can plug in piecewise constant $\phi'_h$ into \eqref{eq:conservationlaw_J} to obtain
\begin{equation*}
  \int_K\left\langle\phi'_h,d\wh D_h+[A_h\wedge D_h]\right\rangle=\int_K\langle\phi'_h,\wt\rho_h\rangle.
\end{equation*}
That is,
\begin{equation*}
  \int_K\left\langle\phi'_h,\wh\rho_h\right\rangle=\int_K\langle\phi'_h,\wt\rho_h\rangle,
\end{equation*}
or, more simply,
\begin{equation*}
  \int_K\wh\rho_h=\int_K\wt\rho_h.
\end{equation*}
In other words, with this semidiscretization, when averaged over each element, the charge as estimated by $d\wh D_h+[A_h\wedge D_h]$ automatically matches the charge as estimated by integrating $-d_{A_h}J$ with respect to time.

\subsection*{Acknowledgments}
Yakov Berchenko-Kogan was supported by an AMS--Simons Travel Grant. Ari Stern acknowledges the support of the National Science Foundation (DMS-1913272) and the Simons Foundation (\#279968). We also wish to thank the anonymous referees for their helpful comments and suggestions.

\footnotesize
\bibliographystyle{siamnat}
\bibliography{BeSt2020}

\end{document}